\documentclass[11pt]{article}

\usepackage{amssymb}
\usepackage{amsmath}
\usepackage{amsfonts}
\usepackage{enumerate}

\usepackage{tikz}
\usetikzlibrary{patterns,snakes}
\usetikzlibrary{shapes.geometric}
\usepackage[makeroom]{cancel}
\usepackage{amsthm}

\usepackage{graphicx}

\newcommand{\bN}{ {\mathbb{N} } }

\newcommand{\bR}{ {\mathbb{R}} }
\newcommand{\bZ}{ {\mathbb{Z}} }
\newcommand{\bC}{ {\mathbb{C}} }

\newcommand{\bE}{ {\mathbb{E}} }
\newcommand{\stirling}[2]{\genfrac{[}{]}{0pt}{}{#1}{#2}}

\newtheorem*{theorem}{Theorem}
\newtheorem*{remark}{Remark}

\newtheorem*{lemma}{Lemma}
\newtheorem*{proposition}{Proposition}
\newtheorem*{corollary}{Corollary}
\newtheorem*{definition}{Definition}

\usepackage{titling}

\pretitle{\begin{center}\Huge\bfseries}
\posttitle{\par\end{center}\vskip 0.5em}
\preauthor{\begin{center}\Large\ttfamily}
\postauthor{\end{center}}
\predate{\par\large\centering}
\postdate{\par}

\title{Combinatorial Insights into Asymptotic Normality}
\author{Alexandru Gatea, Simon Huang}
\date{\today}

\begin{document}

\maketitle
\thispagestyle{empty}

\begin{abstract}
In this paper we examine various sequences $(X_n)_{n \in \bN}$ of real random variables arising naturally in connection with certain combinatorial structures. The goal is to investigate sufficient conditions for convergence to the standard normal distribution upon centering and normalization.
By studying signed binomial sums, Stirling numbers of both kinds, and factorial moments, we obtain certain cancellations in the computation of centred moments $\bE[(X_n - \bE[X_n])^k]$. This allows us to conclude that $\frac{X_n - \bE[X_n]}{\sqrt{Var  \ X_n}} \xrightarrow{d} N(0,1)$ when the factorial moments $\bE[(X_n)_k]$ are rational functions satisfying some simple conditions on the coefficients. A particular class of such random variables are those whose factorial moments are products and quotients of falling factorials in $n$; we apply our result in this situation and we discuss specific examples arising from block statistics of $k$-divisible non-crossing partitions and from outdegree statistics of rooted plane trees with $k$-divisible outdegrees.
\end{abstract}

\begin{center}

\section{Introduction}
\end{center}

A very important and frequently occurring notion in probability theory is that of asymptotic normality of a sequence of random variables. For instance, the Central Limit Theorem has numerous versions, involving different situations and different modes of convergence. In this paper we will study sequences of real random variables that converge in distribution to the standard normal distribution. Our result will be most (but not solely) applicable to sequences of discrete random variables taking values in $\bZ^{\geq 0}$.\\

\noindent More concretely, we seek conditions on sequences of real random variables $(X_n)_{n \in \bN}$ that imply $\frac{X_n - \bE[X_n]}{\sqrt{Var(X_n)}}  \xrightarrow{d} N(0,1)$, where $Var(X_n) = \bE[(X_n - \bE[X_n])^2]$. This problem has been studied quite extensively, with a plethora of results found in the literature. One frequently used tool is the Janson Dependency Criterion (see \cite{janson}). For instance, in \cite{bona}, B\'{o}na used it to prove that for a fixed $k \in \bN$, the normalized random variable corresponding to the number of occurences of a $12 \cdots k$ pattern in a random $n$-permutation (as $n \rightarrow \infty$) is asymptotically normal. In \cite{cumulants}, Bobecka et al. used it to give sufficient asymptotic conditions on factorial cumulants that ensure asymptotic normality, and derived a general technique which they applied to various examples. However, both approaches required rather involved manipulations of the random variables and/or their moments to show that the theorems' conditions were indeed satisfied. The prevalent issue in the strategies for asserting asymptotic normality presented in these papers (and others) is that the key step often requires potentially unwieldy and unintuitive calculations to verify the conditions stated in the main theorem. The conditions derived by Gao and Wormald \cite{gao} are simpler, but still require the factorial moments to be asymptotic to some exponential expressions. When the factorial moments are rational (as is the case for many simple examples), a certain amount of manipulation is required before the results may be applied. \\

\noindent In this paper, we take an elementary combinatorial approach to give conditions on the factorial moments (involving extraction of coefficients from polynomials) that imply asymptotic normality, and are easily verifiable in the case where they are rational functions. Our approach follows in the same spirit as Weiss, generalizing a technique used in \cite{weiss}. When expanding the centred moments in terms of the (uncentred) factorial moments, we demonstrate cancellations in the high-degree terms. The simplicity of our result comes at the cost of some flexibility: our conditions are stricter (they require the factorial moments to be of a precise form, rather than asymptotic equality), and they require the factorial moments to be functions of one variable (whereas other similar limit laws allow for additional variables, so long as they are all asymptotically proportional). \\

\noindent After a preliminary section outlining elementary facts about signed binomial sums, the difference operator, Stirling numbers of both kinds and convergence in distribution, we proceed to our results. We first prove a general theorem -- which requires checking a somewhat unpleasant condition in order to be applied -- and proceed to give the following theorem with an alternative condition, which is simpler and relatively easy to apply.\\

\begin{theorem}[3.2.1]
Let $w_1, w_2 \in \bC, \delta \in \bC \setminus \{0\}$ be constants with $w_1 \neq w_2$ (1) and with \\ 
$2 \delta (w_1-w_2) \neq -1$ (1). Let $(q_p)_{p \geq 0}, (r_p)_{p \geq 0}$ be nonnegative integers satisfying 
$$q_0 = r_0 = 0 \ \mbox{and} \ q_p - r_p = p \ \forall p \in \bN \ (2)$$
and for every $p \geq 0$ let $f_p(x), g_p(x)$ be monic complex polynomials of degrees $q_p, r_p$ respectively.

For every $m \geq 0, p \geq 0$ define $\zeta_m(p) := [x^{q_p - m}] f_p(x)$ and $\tau_m(p):= [x^{r_p-m}] g_p(x)$ (the $m\textsuperscript{th}$ highest coefficients of $f_p(x), g_p(x)$). Suppose $\zeta_m(p), \tau_m(p)$ are polynomials in $p$ of degree $2m$ with respective leading coefficients $\frac{w_1^m}{m!}, \frac{w_2^m}{m!} \ \forall m, p \geq 0$ (3). 

Let $(\chi_n)_{n \in \bN}$ be complex numbers such that $\lim \limits_{n \rightarrow \infty} \chi_n = \infty$ (4). If $(X_n)_{n \in \bN}$ are real random variables satisfying
$$\bE [(X_n)_p] =  \frac{ \delta^p f_p(\chi_n)}{g_p(\chi_n)}  \ \forall p \in \bN, n \in \bN, n > p \ (5)$$
then $\frac{X_n - \bE[X_n]}{\sqrt{Var(X_n)}}  \xrightarrow{d} N(0,1)$.
\end{theorem}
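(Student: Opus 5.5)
The plan is to use the method of moments. The normal distribution is determined by its moments, so it suffices to show that for every fixed $k$
$$\frac{\bE[(X_n-\bE[X_n])^k]}{Var(X_n)^{k/2}} \longrightarrow \mu_k ,$$
where $\mu_k = (k-1)!!$ for even $k$ and $\mu_k = 0$ for odd $k$. I take as given the general theorem proved just before, whose hypothesis is a condition on the coefficients of the factorial moments; the task is to verify that condition from (1)--(5).

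\textbf{Step 1: expand in powers of $1/\chi$.} Write $\chi=\chi_n$. Since $f_p, g_p$ are monic of degrees $q_p, r_p$ with $q_p-r_p=p$, condition (5) gives
$$\bE[(X_n)_p] = \delta^p\chi^p\,\frac{\sum_m \zeta_m(p)\chi^{-m}}{\sum_m \tau_m(p)\chi^{-m}} = (\delta\chi)^p\sum_{m\ge 0} c_m(p)\,\chi^{-m}.$$
This is a formal or asymptotic expansion, valid because $\chi\to\infty$ by (4). By (3), $c_m(p)$ is a polynomial in $p$ of degree at most $2m$. To identify its top coefficient, work at the level of top-degree generating functions: $\sum_m \zeta_m(p)t^m$ behaves like $\exp(w_1p^2t)$ and $\sum_m \tau_m(p)t^m$ behaves like $\exp(w_2p^2t)$. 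Hence the leading coefficient of $c_m(p)$ is $(w_1-w_2)^m/m!$, i.e.
$$c_m(p)=\frac{(w_1-w_2)^m p^{2m}}{m!}+O(p^{2m-1}).$$
This is the structure the general theorem presumably requires.

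\textbf{Step 2: the centred moments.} Convert $\bE[X^k]=\sum_p S(k,p)\,\bE[(X)_p]$ using Stirling numbers of the second kind, and expand
$$\bE[(X-\bE X)^k]=\sum_j \binom{k}{j}(-\bE X)^{k-j}\,\bE[X^j].$$
Group the terms by total power $\chi^{k-m}$. The coefficient of that power is a signed binomial sum of the form $\sum_j (-1)^{k-j}\binom{k}{j}P(j)$, with $P$ a polynomial. By the difference-operator facts from the preliminaries, such a sum vanishes whenever $\deg P<k$. This gives the cancellation: the surviving terms are of order $\chi^{k/2}$, with exponent at most $k/2$ overall.

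\textbf{Step 3: the variance.} Take $k=2$:
$$Var(X)=\bE[(X)_2]+\bE X-(\bE X)^2 .$$
The $\chi^2$ terms cancel, and the $\chi^1$ coefficient is
$$\delta\bigl(1+2\delta(w_1-w_2)\bigr),$$
using $c_1(2)-2c_1(1)$, whose top part is $4(w_1-w_2)-2(w_1-w_2)=2(w_1-w_2)$. Condition (1) says exactly that $2\delta(w_1-w_2)\neq -1$, and $\delta\ne 0$, so $Var(X_n)\sim \sigma\chi$ with $\sigma\neq0$. This is where (1) is used. The assumption $w_1\ne w_2$ is presumably what keeps the expansion genuinely non-Poisson in the general theorem.

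\textbf{Step 4 and the main obstacle.} For general $k$, the coefficient of $\chi^{k/2}$ depends only on the leading parts $(w_1-w_2)^m p^{2m}/m!$ and on the Stirling contributions $S(j,j-1)$. I would evaluate it by comparison with a model computation: a variable whose factorial moments are exactly $(\delta\chi)^p e^{(w_1-w_2)p^2/\chi}$ plus the Stirling correction has Gaussian leading behaviour. This identifies the coefficient as $\mu_k\sigma^{k/2}$, and the odd-$k$ coefficients at half-integer order vanish. The hardest step is this exact evaluation of the top surviving term: one must show that the signed binomial sum with Stirling numbers produces precisely the Gaussian moments. I would handle it by a generating-function argument in $e^{s}$, or by appealing directly to the general theorem if it already packages this computation.
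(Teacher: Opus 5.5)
Your proposal has a genuine gap: as written, it never closes. You say you will take Theorem 3.1.1 as given, but you never check its actual hypothesis. Condition (3) there concerns $a_{m,p}(j)$, the coefficients of the \emph{polynomial} $f_j(x)\prod_{1\le h\le p,\,h\ne j}g_h(x)$ viewed as functions of $j$. Your Step 1 instead proves the analogous statement for the Laurent coefficients $c_m(p)$ of $f_p/g_p$, and leaves the link as ``presumably''. You then hedge by re-deriving Theorem 3.1.1 in Steps 2--4. However, Step 4, which carries the real content (that the surviving coefficient is exactly $\mu_k\sigma^{k/2}$), is only gestured at. The Gaussian behaviour of your ``model'' is asserted rather than proved, and it is not even clear that a random variable with those factorial moments exists.

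The missing link is one line. For $0\le j\le p$ one has $f_j\prod_{h\ne j}g_h=(f_j/g_j)\prod_{h=1}^p g_h$, hence $a_{m,p}(j)=\sum_{s+t=m}c_s(j)\,G^{(p)}_t$. Here $G^{(p)}_t$ is the $t$-th highest coefficient of $\prod_{h=1}^p g_h$; it does not depend on $j$, and $G^{(p)}_0=1$. Since $\deg c_s\le 2s$, $a_{m,p}(j)$ is a polynomial in $j$ whose $j^{2m}$-coefficient is that of $c_m$, namely $(w_1-w_2)^m/m!\ne 0$. With $w=w_1-w_2$, condition (1) of Theorem 3.1.1 is your (1), and that theorem finishes the proof.

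This is essentially the paper's argument. The paper computes the coefficients of $\prod_{h\ne j}g_h$ directly (leading coefficient $(-w_2)^e/e!$, by induction from $\prod_h g_h=g_j\prod_{h\ne j}g_h$) and convolves them with $\zeta_s(j)$. That is the same exponential bookkeeping as your Step 1, which you should make rigorous by substituting $t=s/p^2$ and letting $p\to\infty$ coefficientwise. Your Steps 2--4 are then superfluous.

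If you prefer the self-contained route, Step 4 needs no model. Your own degree count in Step 2 shows that at order $\chi^{k/2}$ only three pieces survive: the leading term of $\bE X$, the top part $j^{2i}/(2^i i!)$ of $S(j,j-i)$, and the top part $w^{k/2-i}j^{k-2i}/(k/2-i)!$ of $c_{k/2-i}(j-i)$. Then $\sum_j\binom{k}{j}(-1)^{k-j}j^k=k!$ and the binomial theorem give the coefficient $\frac{k!}{(k/2)!}\bigl(\delta(1+2\delta w)/2\bigr)^{k/2}=\mu_k\sigma^{k/2}$.

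One smaller point: in Step 3 you need the exact value of $c_1(2)-2c_1(1)$, not only its top part. It equals $\Delta^2c_1(0)=2(w_1-w_2)$ because $c_1(0)=0$, which holds since $f_0=g_0=1$.
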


\noindent Using this result, we obtain a very useful corollary for the case where the $f_p(x), g_p(x)$ are of a ``nice enough form'' involving products of falling factorials (see Theorem 3.3.1). To get a better idea of the practicality of this theorem, one can compare it with the general result in \cite{gao} involving factorial moments; Gao and Wormald's theorem undoubtedly covers a wider range of examples (because one only needs to deal with the asymptotics of the factorial moments), but in situations where the factorial moments can be computed exactly and are of this ``nice form" (which is quite often), it is more convenient to apply our result.\\

\noindent We subsequently study some examples where Theorem 3.3.1 applies. We begin with a quick verification for some simple discrete distributions (e.g. binomial, negative binomial). We proceed to highlight a few more interesting examples arising from block statistics of $k$-divisible non-crossing partitions, such as the number of blocks of size $tk$ in a randomly selected partition $\pi \in NC^k(n)$ (see section 4.3). By looking at generating functions, these examples translate to analogous results in the context of rooted plane trees, a related structure also counted by the Catalan numbers (e.g. the number of vertices with outdegree $tk$ in a randomly chosen rooted plane tree $T \in T^k(n)$ is also asymptotically normal). For example, under this bijection, the following is a rooted plane tree and its corresponding non-crossing partition:\\
\begin{center}
\begin{tikzpicture}
[level distance=10mm,
 my circle/.style={draw, fill, circle, minimum size=5pt, inner sep=0pt},
level 1/.style={sibling distance=30mm},
level 2/.style={sibling distance=15mm},
level 3/.style={sibling distance=10mm}]
\node[my circle]  {} [grow'=up]
child {node[my circle] {}
child {node[my circle] {}
child {node[my circle] {}}
child {node[my circle] {}}
}
child {node[my circle] {}}
child {node[my circle] {}
child {node[my circle] {}}
child {node[my circle] {}}
}
}
child {node[my circle] {}
child {node[my circle] {}
child[missing]
child {node[my circle] {}}
}
child {node[my circle] {}}};
\end{tikzpicture}\\
\end{center}
\begin{center}
\begin{tikzpicture}[thick,font=\small]
     \path (0,0) node (a) {1}
           (1,0) node (b) {2}
           (2,0) node (c) {3}
           (3,0) node (d) {4}
           (4,0) node (e) {5}
           (5,0) node (f) {6}
           (6,0) node (g) {7}
           (7,0) node (h) {8}
           (8,0) node (i) {9}
           (9,0) node (j) {10}
           (10,0) node (k) {11}
           (11,0) node (l) {12};
     \draw (a) -- +(0,0.75) -| (i);
     \draw (j) -- +(0,0.75) -| (l);
     \draw (k) -- +(0,0.5) ;
     \draw (b) -- +(0,0.6) -| (e);
     \draw (e) -- +(0,0.6) -| (f);
     \draw (g) -- +(0,0.6) -| (h);
     \draw (c) -- +(0,0.4) -| (d);

   \end{tikzpicture}
\end{center}
Through these examples, we show how factorial moments can be computed efficiently using Lagrange's Implicit Function Theorem, and how our result can be applied easily to assert asymptotic normality.

\begin{center}

\section{Preliminaries}
\end{center}

\subsection{The Binomial Coefficient}
We recall the definition of and some useful facts about the binomial coefficient.
\begin{definition} [2.1.1]
The \underline{binomial coefficient} ${j \choose k}$ is defined by ${j \choose k} = \begin{cases} \frac{j!}{k!(j-k)!} &  0 \leq k \leq j \\ 0 & k < 0 \leq j \\ (-1)^k {-j + k - 1 \choose k} & j < 0 \leq k \end{cases}$.
\end{definition}

\noindent \begin{remark} [2.1.2]
The following are elementary, well-known results.
\begin{itemize}
\item ${j \choose k} = {j \choose j-k}$ for all $j \in \bZ^{\geq 0}, k \in \bZ$
\item ${j \choose k} = {j-1 \choose k} + {j-1 \choose k -1 }$ for all $j \in \bN, k \in \bZ$
\item ${j \choose k}$ is a polynomial in $j$ of degree $k$ for all $k \in \bZ^{\geq 0}$
\item (The Binomial Theorem) $(x+y)^p = \sum \limits_{j=0}^{\infty} {p \choose j} x^j y^{p-j}$ for all $x, y \in \bR, p \in \bZ$ (for $p > 0$, this sum runs only for $0 \leq j \leq p$ since the rest of the terms are 0)
\end{itemize}
\end{remark}

\subsection{The Difference Operator}
Next, we look at the theory of difference operators. The results discussed in this section, based on \cite{scheinerman}, will allow us to express Stirling numbers of both kinds as polynomials of a particular degree and leading coefficient in the next section, without computing those polynomials explicitly.

\begin{definition}[2.2.1]
For a sequence of real numbers $(a_n)_{n \in \bN}$, we define the difference operator to be $\Delta a_n := a_{n+1} - a_n \ \forall n \geq 0$. 
\end{definition}

\noindent If $a_n$ is a polynomial in $n$ then the difference operator has some nice properties.

\begin{proposition}[2.2.2]
Consider a sequence $(a_n)_{n \in \bN}$ which is a real polynomial in $n$ of degree $d \geq 1$ with leading coefficient $\alpha \neq 0$, say $a_n = \alpha n^d  + \beta n^{d-1} +\rho(n)$ where $\rho(x)$ is a polynomial of degree at most $d-2$. Then $\Delta a_n$ is a polynomial in $n$ of degree $d-1$ with leading coefficient $\alpha d$. Furthermore, $\Delta^d a_n = \alpha d! \ \forall n \in \bN$ and $\Delta^{d+1} a_n = 0 \ \forall n \in \bN$.
\end{proposition}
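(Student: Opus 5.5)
We prove the first claim by a direct computation of $\Delta a_n$ via the Binomial Theorem (Remark 2.1.2), and then obtain the statements about $\Delta^d$ and $\Delta^{d+1}$ by induction on $d$.

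\textbf{Step 1: computing $\Delta a_n$.} Write $a_n = \alpha n^d + \beta n^{d-1} + \rho(n)$ with $\deg \rho \leq d-2$. Then
$$\Delta a_n = \alpha\big[(n+1)^d - n^d\big] + \beta\big[(n+1)^{d-1} - n^{d-1}\big] + \big[\rho(n+1)-\rho(n)\big].$$
We expand $(n+1)^d = \sum_{j} {d \choose j} n^j$. The $n^d$ terms cancel, so the first bracket equals $d\, n^{d-1}$ plus a polynomial of degree at most $d-2$. The same expansion applied to each monomial shows that, for any polynomial $h$ of degree $e \geq 0$, the difference $h(n+1)-h(n)$ has degree at most $e-1$ (and is identically zero when $e = 0$). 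Hence the $\beta$ term has degree at most $d-2$, and so does the $\rho$ term. Therefore $\Delta a_n = \alpha d\, n^{d-1} + (\text{lower order})$. Since $\alpha d \neq 0$, this is a polynomial of degree exactly $d-1$ with leading coefficient $\alpha d$.

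\textbf{Step 2: iterating.} We prove $\Delta^d a_n = \alpha\, d!$ by induction on $d$.

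For $d=1$, Step 1 gives $\Delta a_n = \alpha$, a constant, so $\Delta^2 a_n = \alpha - \alpha = 0$.

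For $d \geq 2$, Step 1 shows $\Delta a_n$ is a polynomial of degree $d-1 \geq 1$ with leading coefficient $\alpha d \neq 0$. The induction hypothesis applied to it gives
$$\Delta^{d} a_n = \Delta^{d-1}(\Delta a_n) = (\alpha d)(d-1)! = \alpha\, d!.$$
This is a constant sequence, so applying $\Delta$ once more gives $\Delta^{d+1} a_n = 0$ for all $n$.

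\textbf{Main point of care.} The only subtle step is checking that the $\beta$ and $\rho$ terms cannot contribute to the $n^{d-1}$ coefficient. This is exactly the degree-drop observation in Step 1, applied separately to each term of lower degree.
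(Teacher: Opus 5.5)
Your proposal is correct and follows essentially the same route as the paper: expand $\Delta a_n$ termwise to get leading term $\alpha d\, n^{d-1}$, then iterate by induction to obtain $\Delta^d a_n = \alpha\, d!$ and $\Delta^{d+1} a_n = 0$. Your explicit degree-drop argument for the $\beta$ and $\rho$ terms and your induction on $d$ spell out details the paper leaves implicit.
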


\begin{proof} We have
$$\Delta a_n = \alpha [(n+1)^d - n^d] + \beta[(n+1)^{d-1} - n^{d-1}] + \rho(n+1) - \rho(n) = \alpha dn^{d-1} + \tilde{\rho}(n) $$
for some polynomial $\tilde{\rho}(x)$ of degree at most $d-2$. Thus $\Delta a_n$ is a polynomial in $n$ of degree $d-1$ with leading coefficient $\alpha d$. By induction, it follows that $\Delta^{d} a_n$ is a polynomial in $n$ of degree 0 with leading coefficient $\alpha d (d-1) \cdots 1 = \alpha d!$ (i.e. $\Delta^{d} a_n = \alpha d!$) and $\Delta^{d+1} a_n = 0$ for all $n \in \bN$. \\
\end{proof} 

\noindent With some work, we can prove the converse. In particular, the overall goal of this section is to obtain a sufficient condition on the difference operator of a sequence of numbers $(a_n)_{n \in \bN}$ that will imply that $a_n$ is a polynomial in $n$ of degree $d$ with leading coefficient $\alpha$, for some $d$ and $\alpha$. The following lemma, proved by Scheinerman in \cite{scheinerman}, shows that the difference operator uniquely determines a sequence of numbers.

\begin{lemma}[2.2.3]
Let $(a_n)_{n \geq 0}, (b_n)_{n \geq 0}$ be sequences of numbers and let $d \geq 1$. Assume $a_0= b_0$, $\Delta^d a_n = \Delta^d b_n = 0 \ \forall n \geq 0$ and $\Delta^j a_0 = \Delta^j b_0 \ \forall 1 \leq j < d$. Then $a_n = b_n \ \forall n \in \bN$.
\end {lemma}

%

\noindent Using Lemma 2.2.3, Scheinerman \cite{scheinerman} characterizes precisely when such an $a_n$ can be expressed as a polynomial in $n$, and what this polynomial will be in terms of the difference operator.

\begin{proposition}[2.2.4]
Let $(a_n)_{n \geq 0}$ be a sequence of numbers. Then $a_n$ can be expressed as a polynomial in $n$ if and only if $\exists \ k \geq 0$ s.t. $\Delta^{k+1} a_n = 0 \ \forall n \geq 0$. If this is the case, then  
$$a_n = a_0 {n \choose 0} + (\Delta a_0) {n \choose 1} + \ldots + (\Delta^k a_0) {n \choose k}.$$
\end {proposition}

\noindent Finally, by looking at the degree and leading coefficient of the polynomial expression in Proposition 2.2.4, we get a necessary and sufficient condition on $\Delta a_n$ that implies $a_n$ is a polynomial in $n$.

\begin{corollary}[2.2.5]
Let $(a_n)_{n \geq 0}$ be a sequence of numbers, let $d \geq 1$ and let $\alpha \in \bC \setminus \{0\}$. Then $a_n$ is a polynomial of degree $d$ with leading coefficient $\alpha$ if and only if $\Delta a_n$  is a polynomial of degree $d-1$ with leading coefficient $\alpha d$. 
\end {corollary}
\begin{proof}
The $\Rightarrow$ direction was already proved in Proposition 2.2.2. Suppose $\Delta a_n$  is a polynomial of degree $d-1$ with leading coefficient $\alpha d$. Then $\Delta^{d+1} a_n = \Delta^{d-1+1} (\Delta a_n) = 0$
$$\mbox{so} \ a_n = a_0 {n \choose 0} + (\Delta a_0) {n \choose 1} + \ldots + (\Delta^d a_0) {n \choose d} \ \mbox{by Proposition 2.2.4}$$
and thus $a_n$ is a polynomial in $n$ of degree $d$ with leading coefficient $\frac{\Delta^d a_0}{d!}$ by Remark 2.1.2. Since $\Delta a_n$ is a polynomial of degree $d-1$, then by Proposition 2.2.2 we may see that $\Delta^d a_n = \Delta^{d-1} (\Delta a_n) = (d-1)! (\alpha d) = d! \alpha$, so $a_n$ is a polynomial of degree $d$ with leading coefficient $\alpha$.\\
\end{proof}

\subsection{Stirling Numbers of the First and Second Kinds}
Stirling numbers of both kinds occur throughout combinatorics. The unsigned Stirling number of the first kind $\stirling{j}{k}$ counts the number of permutations in $S_j$ with $k$ disjoint cycles. The Stirling number of the second kind ${j \brace k}$ counts the number of partitions a set of size $j$ into $k$ non-empty subsets.

The Stirling numbers also appear when passing between powers and falling factorials. Our general result is stated with conditions on the factorial moments $\bE[(X_n)_k]$ (because these are easier to compute in practice), so Stirling numbers of the second kind will appear as the coefficients when writing the raw moments $\bE[X_n^k]$ in terms of the factorial moments. In a later section, when we investigate what sort of rational functions fit our general result, we will go in the other direction: a falling factorial $(x)_k$ can be written as a polynomial in $x$, with Stirling numbers of the first kind appearing as the coefficients.

\begin{definition} [2.3.1]
 For $j \in \bZ^{\geq 0},k \in \bZ$ define the \underline{unsigned Stirling number of the first kind} $\stirling{j}{k}$ to be determined by $(x)_j = \sum \limits_{k=0}^j \stirling{j}{k} (-1)^{j-k} x^k \ \forall j \in \bZ^{\geq 0}$ and by $\stirling{j}{k} = 0$ for $k > j$ or $k < 0$. 
\end{definition}

The following recurrence relation for unsigned Stirling numbers of the first kind follows immediately from Definition 2.3.1.

\begin{proposition}[2.3.2] 
$\stirling{j}{k} = \stirling{j-1}{k-1} + (j-1) \stirling{j-1}{k} \ \forall j \in \bN, k \in \bZ$
\end{proposition}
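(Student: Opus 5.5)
The recurrence follows by comparing coefficients after multiplying a falling factorial by one more linear factor. I would work from Definition 2.3.1, using the identity $(x)_j = (x)_{j-1}\,(x - (j-1))$ for $j \in \bN$. Expanding both sides in the monomial basis $\{x^k\}$ and matching coefficients should give the recurrence directly.

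First I expand the right-hand side. Write $(x)_{j-1} = \sum_{k} \stirling{j-1}{k} (-1)^{j-1-k} x^k$, where the sum may be taken over all $k \in \bZ$ because of the convention that $\stirling{j-1}{k}=0$ outside $0 \le k \le j-1$. Multiplying by $x$ shifts indices and gives $\sum_k \stirling{j-1}{k-1}(-1)^{j-k} x^k$. Multiplying by $-(j-1)$ gives $\sum_k (j-1)\stirling{j-1}{k}(-1)^{j-k} x^k$, since the extra sign turns $(-1)^{j-1-k}$ into $(-1)^{j-k}$.

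Next I compare this with the left-hand side. By the definition, the coefficient of $x^k$ in $(x)_j$ is $\stirling{j}{k}(-1)^{j-k}$. Since polynomials in $x$ are equal exactly when their coefficients agree, for $0 \le k \le j$ we get
$$\stirling{j}{k}(-1)^{j-k} = \left(\stirling{j-1}{k-1} + (j-1)\stirling{j-1}{k}\right)(-1)^{j-k}.$$
Cancelling the common sign gives the recurrence.

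The only real obstacle is the boundary cases $k<0$ and $k>j$, which the definition does not cover through the expansion. Both sides vanish there by the zero convention. For $k<0$, the terms $\stirling{j-1}{k-1}$ and $\stirling{j-1}{k}$ are both zero. For $k>j$, we have $k-1>j-1$ and $k>j-1$, so again both terms are zero. At the edges $k=0$ and $k=j$, the out-of-range terms are zero by the same convention, and this is consistent with the shifted expansion above. So the recurrence holds for all $k \in \bZ$.
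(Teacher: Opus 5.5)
Your proof is correct and is essentially the argument the paper has in mind: the paper gives no written proof, only noting that the recurrence follows immediately from Definition 2.3.1, and comparing coefficients of $x^k$ in $(x)_j=(x)_{j-1}\,(x-(j-1))$ is that immediate derivation. Your separate check of the cases $k<0$ and $k>j$ via the zero convention supplies the one detail the paper leaves implicit.
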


\begin{definition} [2.3.3]
 For any $j \in \bZ^{\geq 0},k \in \bZ$ define the \underline{Stirling number of the second kind} ${j \brace k}$ to be determined by $x^j = \sum \limits_{k=0}^j {j \brace k} (x)_k \ \forall j \in \bZ^{\geq 0}$ and by ${j \brace k} = 0$ for $k > j$ or $k < 0$. 
\end{definition}

The following recurrence relation for Stirling numbers of the second kind follows immediately from Definition 2.3.3.

\begin{proposition}[2.3.4]
${j \brace k} = {j-1 \brace k-1} + k {j-1 \brace k} \ \forall j \in \bN, k \in \bZ$
\end{proposition}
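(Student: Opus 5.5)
The statement is the recurrence ${j \brace k} = {j-1 \brace k-1} + k {j-1 \brace k}$ for $j \in \bN$, $k \in \bZ$. The plan is to compare two expansions of $x^j$ in the falling-factorial basis and then use uniqueness of coefficients, since Definition 2.3.3 defines the numbers ${j \brace k}$ only through the identity $x^j = \sum_k {j \brace k}(x)_k$.

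First I would justify that this definition is well posed. The falling factorials $(x)_0, (x)_1, \ldots, (x)_j$ are monic polynomials of degrees $0, 1, \ldots, j$. They therefore form a basis of the polynomials of degree at most $j$, so the coefficients of $x^j$ in this basis are uniquely determined.

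Next I would write $x^j = x \cdot x^{j-1} = \sum_{k} {j-1 \brace k}\, x (x)_k$. The key identity is $x (x)_k = (x)_{k+1} + k (x)_k$, which holds because $(x)_{k+1} = (x)_k (x-k)$. Substituting gives
$$x^j = \sum_{k} {j-1 \brace k} (x)_{k+1} + \sum_k k {j-1 \brace k} (x)_k .$$
Shifting the index in the first sum, the coefficient of $(x)_k$ becomes ${j-1 \brace k-1} + k {j-1 \brace k}$. Comparing with the defining expansion of $x^j$ and invoking uniqueness yields the recurrence for $0 \le k \le j$.

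For the remaining $k$ I would check the boundary conventions directly. If $k < 0$ or $k > j$, both sides are zero, since every term involves an index outside $[0, j-1]$ on the right. At $k = 0$ the extra term carries the factor $k = 0$, so the recurrence reads ${j \brace 0} = {j-1 \brace -1} = 0$, which is correct for $j \ge 1$ and agrees with the expansion. The only real care needed is this bookkeeping at the boundary and the uniqueness step; no step is a genuine obstacle.
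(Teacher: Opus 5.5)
Your proof is correct and is essentially the paper's approach: the paper gives no written argument, only the remark that the recurrence follows immediately from Definition 2.3.3. Your derivation, using $x(x)_k = (x)_{k+1} + k(x)_k$ and uniqueness of coefficients in the falling-factorial basis, is exactly that immediate derivation. You also supply details the paper leaves implicit: that the definition is well posed, and that the cases $k<0$ and $k>j$ hold.
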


\noindent We now apply Corollary 2.2.5 from the previous section in order to express Stirling numbers of the forms $\stirling{j}{j-i}$ and ${j \brace j-i}$ as polynomials in $j$ of a certain degree and leading coefficient. This is the essential result from this section on Stirling numbers.

\begin{proposition}[2.3.5]
For every fixed $i \geq 0$, $\stirling{j} {j-i}$ and ${j \brace j-i}$ are polynomials in $j$ of degree $2i$ with leading coefficient $\frac1{2^i i!}$.
\end {proposition}

\begin{proof}
Fix any $i \geq 0$ and define $\rho_i(j) =\stirling{j} {j-i}, \eta_i(j) = {j \brace j-i} \ \forall j \geq 0$. We will prove that $\rho_i(j)$ and $\eta_i(j)$ are polynomials in $j$ of degree $2i$ with leading coefficient $\frac1{2^i i!}$ for all $i \geq 0$ by induction on $i$.\\
\underline{Base Case:} $i=0$\\
We have $\rho_0(j) = \stirling{j} {j} = 1 = \frac1{2^0 0!} j^0 = {j \brace j} = \eta_0(j)$ so the claim holds for $i=0$.\\
\underline{Inductive Hypothesis:} Suppose $\rho_i(j), \eta_i(j)$ are polynomials in $j$ of degree $2i$ with leading coefficient $\frac1{2^i i!}$ for some $i \geq 0$.\\
\underline{Inductive Step:} Define the sequences $a_j := \rho_{i+1}(j), b_j := \eta_{i+1}(j)  \ \forall j \geq 0$. Applying Propositions 2.3.2 and 2.3.4 we get the recurrence relations
$$\stirling{j+1} {j-i}=\stirling{j+1} {j+1-i-1} = \stirling{j} {j-i-1} + j \stirling{j} {j-i} \ \forall j \in \bZ^{\geq 0}$$
$${j+1 \brace j-i}= {j+1 \brace j+1-i-1} = {j \brace j-i-1} + (j-i) {j \brace j-i} \ \forall j \in \bZ^{\geq 0}$$
which are equivalent to $a_{j+1} = a_j + j \rho_{i}(j), b_{j+1} = b_j + (j-i) \eta_{i}(j) \ \forall j \in \bZ^{\geq 0} $. That is,
$$\Delta a_j = j \rho_i(j) , \Delta b_j = (j-i) \eta_i(j) \ \forall j \in \bZ^{\geq 0}$$
By the Induction Hypothesis, both $ j \rho_i(j)$ and $ (j-i) \eta_i(j)$ are polynomials in $j$ of degree $2i+1$ with leading coefficient $\frac1{2^i i!}$. By Corollary 2.2.5, we get that $a_j, b_j$ are polynomials in $j$ of degree $2i+2$ with leading coefficient $\frac1{2i+2} \cdot \frac1{2^i i!} = \frac1{2^{i+1} (i+1)!}$.\\
\end{proof}

\subsection{Signed Binomial Sum Identities}
The key idea in the approach we will use to prove asymptotic normality is that certain signed binomial sums are very easy to simplify. In this small section, we will give two identities that faciliate the calculation of the leading term of $\bE[(X_n - \bE[X_n])^k]$ in the proof of the general result. 

\begin{lemma}[2.4.1]
Let $p \in \bN, m \in \bZ^{\geq 0}$ with $ m \leq p$. Then
$$\sum_{j=0}^p {p \choose j} (-1)^{p-j} (j)_m = \begin{cases} p! & \mbox{if} \  m = p \\ 0 & \mbox{if} \ m < p \end{cases} $$
where $(j)_m$ is the falling factorial $j (j-1) \cdots (j-m+1)$ with $(j)_0 := 1$.
\end {lemma}

\begin{proof}
Observe that
\begin{align*}
\sum_{j=0}^p {p \choose j} (-1)^{p-j} (j)_m &= \sum_{j=m}^p \frac{p!}{j! (p-j)!} \cdot j (j-1) \cdots (j-m+1) (-1)^{p-j}  \\
&\quad \text{since the terms in the sum with $0 \leq j \leq m-1$ are 0}\\
&= \sum_{j=m}^p p(p-1) \cdots (p-m+1) \frac{(p-m)!}{(j-m)! (p-j)!}  (-1)^{p-j}  \\
&= (p)_m \sum_{j=m}^p {p-m \choose j-m}  (-1)^{(p-m) -(j-m)}   \\
&= \begin{cases} (p)_p & \mbox{if} \ m = p \\ (p)_m (-1+1)^{p-m} & \mbox{if} \ m < p \end{cases} \\
&=  \begin{cases} p! & \mbox{if} \  m = p \\ 0 & \mbox{if} \ m < p \end{cases} 
\end{align*}
by the Binomial Theorem.\\
\end{proof}

\noindent This lemma itself is quite a useful identity. However, in the proof of the general result, we will have signed binomial sums with some unknown polynomial in $j$ in the place of the falling factorial $(j)_m$. Consequently, we will work with the following more general identity, which is a straightforward corollary of Lemma 2.4.1 and Definition 2.3.3.
\begin{proposition}[2.4.2]
Let $p \in \bN$ and let $f(x)$ be a polynomial of degree at most $p-1$. Then
$$\sum_{j=0}^p {p \choose j} (-1)^{p-j} j^p = p! \ \mbox{and} \ \sum_{j=0}^p {p \choose j} (-1)^{p-j} f(j) = 0 .$$
\end {proposition}

\subsection{Extracting Coefficients}
The proof of the general result relies on extracting coefficients from various polynomials. The following result will aid us in this calculation.

\begin{proposition}[2.5.1]
Let $f(x)$ be a complex polynomial of degree $d \geq 1$ and let $m \geq 0$. If $a_j := [x^{jd - m}] f(x)^j \ \forall j \geq 0$ then $a_j$ is a complex polynomial in $j$ of degree at most $m$.
\end{proposition}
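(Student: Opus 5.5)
Write $f(x)=c_d x^d+c_{d-1}x^{d-1}+\dots+c_0$ with $c_d\neq 0$, and pull out the leading term: $f(x)=c_d x^d\,(1+u(y))$, where $y=x^{-1}$ and $u(y)=\sum_{i=1}^{d}(c_{d-i}/c_d)\,y^i$ is a polynomial in $y$ with zero constant term. Then
$$f(x)^j = c_d^{\,j}x^{jd}\,(1+u(y))^j,$$
so
$$a_j=[x^{jd-m}]f(x)^j = c_d^{\,j}\,[y^m](1+u(y))^j .$$
The plan is to show that $b_j:=[y^m](1+u(y))^j$ is a polynomial in $j$ of degree at most $m$. One can then either absorb the factor $c_d^{\,j}$ into a normalization (assuming $f$ monic, $c_d=1$, which is the case in the intended application to $f_p,g_p$), or note that the statement as used really concerns $c_d^{-j}a_j$. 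I would state this dependence on $c_d$ explicitly, since strictly $a_j$ itself is not polynomial in $j$ unless $c_d=1$. Most likely the paper intends monic $f$ or means this normalized version.

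For the key step, expand with the binomial theorem (Remark 2.1.2):
$$(1+u(y))^j=\sum_{k=0}^{j}{j\choose k}u(y)^k .$$
Since $u$ has no constant term, $u(y)^k$ has lowest-degree term $y^k$, so only $k\le m$ contributes to $[y^m]$. Hence
$$b_j=\sum_{k=0}^{m}{j\choose k}\,[y^m]u(y)^k,$$
which holds for all $j\ge 0$: for $j<k$ we have ${j\choose k}=0$, consistent with the truncated sum. Each coefficient $[y^m]u(y)^k$ is a constant independent of $j$. Each ${j\choose k}$ is a polynomial in $j$ of degree $k\le m$ (Remark 2.1.2). Therefore $b_j$ is a polynomial in $j$ of degree at most $m$.

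An alternative route is Corollary 2.2.5 / Proposition 2.2.4: compute $\Delta b_j=[y^m]\,u(y)(1+u(y))^j$ and induct on $m$. The direct binomial expansion is cleaner, so I would use it.

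The only real obstacle is bookkeeping. One must check that the truncation to $k\le m$ is valid uniformly in $j$, including small $j$ where the binomial coefficients vanish, so that a single polynomial works for all $j\ge 0$. One must also handle the leading-coefficient factor $c_d^{\,j}$ correctly; this is resolved by normalizing to the monic case.

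As a byproduct useful later, the top coefficient can be read off from the $k=m$ term: it is $\frac{1}{m!}\,(c_{d-1}/c_d)^m$. This is consistent with hypothesis (3) of Theorem 3.2.1.
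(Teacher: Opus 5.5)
Your proof is correct, and it takes a different route from the paper's.

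The paper factors $f(x)=\delta\prod_{s=1}^{d}(x+\alpha_s)$ using the Fundamental Theorem of Algebra. It then expands each $(x+\alpha_s)^j$ separately and writes the normalized coefficient as $\sum_{m_1+\cdots+m_d=m}\prod_{s}{j \choose m_s}\alpha_s^{m_s}$, a sum of products of polynomials in $j$ of total degree $m$.

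You instead factor out only the leading monomial and do a single binomial expansion in $u(y)$. This avoids factoring into linear factors. It also gives the expansion of $b_j$ in the basis ${j \choose k}$ directly, with $\Delta^k b_0=[y^m]u(y)^k$ in the sense of Proposition 2.2.4. As a result you read off the top coefficient $\frac{1}{m!}(c_{d-1}/c_d)^m$ without Vieta; the paper's form gives the same value as $\frac{1}{m!}(\sum_s\alpha_s)^m$.

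Your caveat about $c_d^{\,j}$ is correct and worth keeping. As stated, the proposition fails for non-monic $f$: with $m=0$ one gets $a_j=c_d^{\,j}$. The paper's proof hides this by writing the prefactor of $f(x)^j$ as $\delta$ where it should be $\delta^j$. The result is only applied to the monic polynomials $f_1,g_1$ in Step 2 of Theorem 3.1.1, so you should state it for monic $f$, or for $c_d^{-j}a_j$.

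Your closing link to hypothesis (3) of Theorem 3.2.1 is unnecessary. Only the bound ``degree at most $m$'' is used later.
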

\begin{proof}
Since $f(x)$ is a nonconstant complex polynomial of degree $d$, then by the Fundamental Theorem of Algebra it can be factored as $f(x) = \delta \prod \limits_{s=1}^d (x+ \alpha_s)$ for some (not necessarily distinct) $\alpha_s \in \bC$ and leading coefficient $\delta \in \bC \setminus \{0\}$. But then $\forall j \geq 0$, we have:
$$a_j = \delta [x^{jd - m}] \prod \limits_{s=1}^d (x +\alpha_s)^j = \delta \sum_{\substack{m_1, \ldots, m_d \geq 0\\ m_1 + \ldots + m_d = m}} \prod \limits_{s=1}^d [x^{j - m_s}] (x +  \alpha_s)^j = \delta \sum_{\substack{m_1, \ldots, m_d \geq 0\\ m_1 + \ldots + m_d = m}} \prod \limits_{s=1}^d {j \choose j - m_s} \alpha_s^{m_s}$$
by the Binomial Theorem, where we sum over all tuples $(m_1,\ldots,m_d)$, including those with some $m_s > j$ (since the corresponding binomial coefficient will be 0). By Remark 2.1.2,  every ${j \choose j - m_s}$ is a polynomial in $j$ of degree $m_s$. Thus each $\prod \limits_{s=1}^d {j \choose j - m_s} \alpha_s^{m_s}$ is a polynomial in $j$ of degree $\sum \limits_{s=1}^d m_s = m$, so $a_j$ is a polynomial in $j$ of degree at most $m$. 

\end{proof}

\subsection{Convergence in Distribution to $N(0,1)$}
Before discussing the main results of this paper, we must briefly recall the definition of convergence in distribution, and also give a sufficient condition for asymptotic normality. 

\begin{definition}[2.6.1]
A sequence of random variables $(X_n)_{n \in \bN}$ with respective cumulative distribution functions $(F_n)_{n \in \bN}$ \underline{converges in distribution} to a random variable $X$ with cumulative distribution function $F$ if $ \lim \limits_{n \rightarrow \infty} F_n(t)  = F(t) \ \forall t$ where $F$ is continuous. We denote this by $X_n \xrightarrow{d} X$.
\end{definition} 

\begin{definition}[2.6.2]
A distribution $\mu$ is \underline{uniquely determined by its moments} if whenever a distribution $\nu$ has moments of all orders equal to the corresponding moments of $\mu$, we have $\mu = \nu$.
\end{definition} 

\noindent The following known result, proven by Billingsley in \cite{billingsley}, states that if a random variable $X$ is uniquely determined by its moments $\bE[X^k]$, then a sequence of random variables $(X_n)_{n \in\bN}$ converges to $X$ in distribution if each sequence of moments converges to the corresponding moment of $X$.

\begin{proposition}[2.6.3]
Let $(X_n)_{n \in \bN}$ and $X$ be real random variables with finite moments of all orders, such that the distribution of $X$ is uniquely determined by its moments. If $ \lim \limits_{n \rightarrow \infty} \bE[X_n^k]  = \bE[X^k] \ \forall k \in \bN$ then $X_n \xrightarrow{d} X$.
\end{proposition}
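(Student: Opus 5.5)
The plan is the standard subsequence argument: tightness, Helly selection, identification of every subsequential limit through its moments, and then the uniqueness hypothesis of Definition 2.6.2. Let $F_n$ be the distribution function of $X_n$ and $F$ that of $X$.

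\textbf{Tightness.} Since $\bE[X_n^2] \to \bE[X^2] < \infty$, we have $M := \sup_n \bE[X_n^2] < \infty$. Chebyshev's inequality then gives $P(|X_n| > t) \le M/t^2$ for all $n$. Hence the family $(F_n)$ is tight: for every $\varepsilon > 0$ there is $t$ with $F_n(-t) + 1 - F_n(t) < \varepsilon$ uniformly in $n$.

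\textbf{Helly selection.} By Helly's selection theorem, every subsequence $(F_{n_i})$ has a further subsequence converging at all continuity points to a right-continuous nondecreasing function $G$. Tightness ensures that $G$ is a genuine distribution function, with no mass escaping to $\pm\infty$. Let $Y$ be a random variable with distribution $G$, so that $X_{n_{i_l}} \xrightarrow{d} Y$.

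\textbf{Identifying the limit.} This is the step needing most care: we must show $\bE[Y^k] = \bE[X^k]$ for every $k$, i.e.\ that moments pass to the limit along the subsequence. Fix $k$ and use the bound $\sup_n \bE[X_n^{2k}] < \infty$, which holds because this sequence converges. This gives uniform integrability of $(X_n^k)_n$, since
$$\bE\bigl[|X_n|^k \mathbf{1}_{\{|X_n|>t\}}\bigr] \le t^{-k}\,\bE[X_n^{2k}].$$
I would combine this with a truncation. Let $h_t$ be the bounded continuous function equal to $x^k$ on $[-t,t]$ and to $\mathrm{sign}(x)^k t^k$ outside. Then $\bE[h_t(X_{n_{i_l}})] \to \bE[h_t(Y)]$ by convergence in distribution, which one can justify via Skorokhod representation or directly from the definition using a Riemann–Stieltjes approximation at continuity points. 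The truncation error $|\bE[X_n^k] - \bE[h_t(X_n)]|$ is at most $2t^{-k} M_{2k}$ uniformly in $n$, where $M_{2k} := \sup_n \bE[X_n^{2k}]$. By Fatou's lemma, $\bE[Y^{2k}] \le M_{2k}$, so the same bound controls the error for $Y$. Letting $t \to \infty$ gives $\bE[Y^k] = \lim_l \bE[X_{n_{i_l}}^k] = \bE[X^k]$.

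\textbf{Conclusion.} Since the distribution of $X$ is uniquely determined by its moments, $Y$ has the same distribution as $X$, so $G = F$. Thus every subsequence of $(F_n)$ has a further subsequence converging to $F$ at every continuity point of $F$. Now suppose $F_n(t_0) \not\to F(t_0)$ at some continuity point $t_0$. Then some subsequence stays at distance $\varepsilon > 0$ from $F(t_0)$, and no further subsequence of it can converge to $F$ at $t_0$, a contradiction. Hence $X_n \xrightarrow{d} X$.
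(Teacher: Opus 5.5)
Your proof is correct and is essentially the standard argument from Billingsley (Theorem 30.2), which the paper cites instead of proving the result itself: tightness from $\sup_n \bE[X_n^2]<\infty$, Helly selection, a bounded higher even moment giving uniform integrability so that moments pass to each subsequential limit, uniqueness of the moment problem to identify that limit as $X$, and the subsequence criterion for convergence in distribution. Your truncation-plus-Fatou step is simply an explicit proof of the uniform-integrability corollary that Billingsley invokes at that point.
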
 

\noindent In \cite{billingsley}, Billingsley also shows that the standard normal distribution is uniquely determined by its moments. Moreover, all higher-order moments of the standard normal distribution are well-known.

\begin{proposition}[2.6.4]
Let $X \sim N(0,1)$. Then $\bE[X^k] = \begin{cases} \frac{k!}{(k/2)! 2^k} & \mbox{$k$ is even} \\ 0 & \mbox{$k$ is odd} \end{cases}$ for all $k \in \bN$.
\end{proposition}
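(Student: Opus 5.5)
The plan is to obtain all moments at once from the moment generating function of $X$, and to cross-check the result with a two-step recurrence. One remark first: the exponent in the denominator must be $k/2$, not $k$. The intended formula is $\bE[X^k] = \frac{k!}{(k/2)!\,2^{k/2}}$ for even $k$, which equals the double factorial $(k-1)!!$. Already for $k=2$ the variance is $1 = \frac{2!}{1!\cdot 2^1}$, whereas $\frac{2!}{1!\cdot 2^2} = \frac12$. I will prove the statement with this correction.

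The first step is to compute the moment generating function. For real $t$,
$$M(t) = \bE[e^{tX}] = \frac{1}{\sqrt{2\pi}} \int_{-\infty}^{\infty} e^{tx - x^2/2}\,dx = e^{t^2/2}\cdot \frac{1}{\sqrt{2\pi}}\int_{-\infty}^{\infty} e^{-(x-t)^2/2}\,dx = e^{t^2/2}.$$
Here I complete the square in the exponent and use translation invariance of the Gaussian integral $\int e^{-u^2/2}\,du = \sqrt{2\pi}$. Since $M(t)$ is finite for every real $t$, the function $\bE[e^{tX}]$ is analytic near $0$. By dominated convergence, using $\sum_k |tx|^k/k! = e^{|tx|} \le e^{tx}+e^{-tx}$, which is integrable, we may interchange sum and expectation. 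This gives $M(t) = \sum_{k\ge 0} \bE[X^k]\, t^k/k!$.

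The second step is to compare coefficients. Expanding $e^{t^2/2} = \sum_{m \ge 0} \frac{t^{2m}}{2^m m!}$, the coefficient of $t^k$ vanishes when $k$ is odd, so $\bE[X^k]=0$ for odd $k$. For $k = 2m$ we get $\bE[X^{2m}]/(2m)! = 1/(2^m m!)$, which is exactly the claimed formula with $m = k/2$.

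As an independent check, which also avoids the interchange argument, I would integrate by parts using $\frac{d}{dx}e^{-x^2/2} = -x e^{-x^2/2}$. This gives $\bE[X^k] = (k-1)\,\bE[X^{k-2}]$ for $k\ge 2$, with $\bE[X^0]=1$ and $\bE[X]=0$ by symmetry. Induction then yields $(k-1)(k-3)\cdots 1 = \frac{k!}{2^{k/2}(k/2)!}$ for even $k$ and $0$ for odd $k$.

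There is no real obstacle. The only points needing care are justifying the termwise expansion of the moment generating function (handled by the domination above) and correcting the exponent in the statement.
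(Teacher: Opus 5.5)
Your proof is correct and complete. Your correction of the statement is also right: the even moments are $\frac{k!}{(k/2)!\,2^{k/2}} = (k-1)!!$, not $\frac{k!}{(k/2)!\,2^{k}}$, as your $k=2$ check shows.

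The paper gives no proof of Proposition 2.6.4; it simply calls these moments well known, so there is no argument of theirs to compare yours against. The same typo recurs in the display closing Section 2.6. However, the constant the paper actually derives and uses in Theorem 3.1.1, $\frac{p!}{(p/2)!\,2^{p/2}}$, is your corrected one, so nothing downstream is affected.

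Both of your routes are sound:
\begin{itemize}
\item The domination $e^{|tx|}\le e^{tx}+e^{-tx}$ justifies the termwise expansion and shows that all moments are finite.
\item Comparing coefficients is legitimate because both power series converge on all of $\bR$.
\item In the recursion, the boundary terms $x^{k-1}e^{-x^2/2}$ vanish at $\pm\infty$.
\end{itemize}

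The moment generating function route has a side benefit in this paper. Finiteness of $\bE[e^{tX}]$ near $0$ is exactly the standard criterion guaranteeing that $N(0,1)$ is determined by its moments. That is the other fact the paper borrows from Billingsley in order to apply Proposition 2.6.3.
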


\noindent As discussed before, our goal is to find sufficient conditions on real random variables $(X_n)_{n \in \bN}$ that imply that $\frac{X_n - \bE[X_n]}{\sqrt{Var(X_n)}}  \xrightarrow{d} N(0,1)$. By the results in this section, it suffices to seek conditions that imply 
$$\lim_{n \rightarrow \infty} \frac{\bE[(X_n - \bE[X_n])^p]}{Var(X_n)^{p/2}} = \begin{cases} \frac{p!}{(p/2)! 2^p} &   \mbox{$p$ is even} \\ 0 & \mbox{$p$ is odd} \end{cases}  \ \forall p \in \bN.$$

\begin{center}

\section{The General Results}
\end{center}
\subsection{The Main Theorem}
\noindent The following is the main result obtained in this paper; it states that if $(X_n)_{n\in\bN}$ is a sequence of real random variables whose factorial moments are rational expressions in some complex $\chi_n$ (where $\chi_n \rightarrow \infty$) with coefficients and degrees satisfying certain conditions, then $\frac{X_n - \bE[X_n]}{\sqrt{Var(X_n)}}  \xrightarrow{d} N(0,1)$.

\begin{theorem}[3.1.1]
Let $w, \delta \in \bC \setminus \{0\}$ be constants with $2 \delta w \neq -1$ (1). Let $(q_p)_{p \geq 0}, (r_p)_{p \geq 0}$ be nonnegative integers satisfying 
$$q_0 = r_0 = 0 \ \mbox{and} \ q_p - r_p = p \ \forall p \in \bZ^{\geq 0} \ (2).$$

For every $p \geq 0$ let $f_p(x), g_p(x)$ be monic complex polynomials of degrees $q_p, r_p$ respectively. For every $m \geq 0, p \geq 1, 0 \leq j \leq p$ define $a_{m,p}(j):= [x^{q_j - r_j + \sum \limits_{h=1}^p r_h-m} ]f_j(x) \prod \limits_{1 \leq h \leq p, h \neq j} g_h(x)$ (the $m\textsuperscript{th}$ highest coefficient of the expression). Suppose that $a_{m,p}(j)$ is a polynomial in $j$ of degree $2m$ with leading coefficient $\frac{w^m}{m!}$ (3), for all $m \geq 0, p \geq 1$.

Let $(\chi_n)_{n \in \bN}$ be a sequence of complex numbers such that $\lim \limits_{n \rightarrow \infty} \chi_n = \infty$ (4). If $(X_n)_{n \in \bN}$ is a sequence of real random variables satisfying
$$\bE [(X_n)_p] =  \frac{ \delta^p f_p(\chi_n)}{g_p(\chi_n)}  \ \forall p \in \bN, n \in \bN, n > p \ (5)$$
$$\mbox{then} \ \lim_{n \rightarrow \infty} \frac{\bE [(X_n - \bE [X_n])^p]} {\bE [(X_n - \bE [X_n])^2]^{p/2}}= 
\begin{cases}
\frac{p!}{(p/2)! 2^{p/2}} & \mbox{$p$ is even} \\
0 & \mbox{$p$ is odd} \\
\end{cases} \ \mbox{so} \ \frac{X_n - \bE[X_n]}{\sqrt{Var(X_n)}}  \xrightarrow{d} N(0,1).$$

\end{theorem}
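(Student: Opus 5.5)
The plan is to use the moment method of Section 2.6. Expand the centred moment in falling-factorial moments, put everything over the common denominator $\prod_{h=1}^p g_h(\chi_n)$, and expand the result in decreasing powers of $\chi_n$. The signed binomial sums of Proposition 2.4.2 should then kill every coefficient of $\chi_n^{p-s}$ with $2s<p$, so that $\bE[(X_n-\mu_n)^p]=O(\chi_n^{p/2})$ with an explicit leading coefficient.

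Write $\mu_n=\bE[X_n]=\delta f_1(\chi_n)/g_1(\chi_n)$, $R_p=\sum_{h=1}^p r_h$ and $D_n=\chi_n^{-R_p}\prod_{h=1}^p g_h(\chi_n)$. Since the $g_h$ are monic, $D_n\to 1$. By the binomial theorem and Definition 2.3.3,
$$\bE[(X_n-\mu_n)^p]=\sum_{j=0}^p {p\choose j}(-1)^{p-j}\mu_n^{p-j}\sum_{i\ge 0}{j \brace j-i}\,\bE[(X_n)_{j-i}].$$
Each factorial moment with $k=j-i\le p$ can be rewritten as
$$\bE[(X_n)_k]=\delta^k\chi_n^{k}D_n^{-1}\sum_{m\ge 0}a_{m,p}(k)\chi_n^{-m}.$$
This is exactly what the normalisation of $a_{m,p}$ in hypothesis (3) is designed for: $f_k\prod_{h\ne k}g_h$ has degree $k+R_p$, and it is applied at the values $k=j-i\in\{0,\dots,p\}$. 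For the powers of the mean, $\mu_n^{p-j}=\delta^{p-j}\chi_n^{p-j}\bigl(f_1(\chi_n)/(\chi_n g_1(\chi_n))\bigr)^{p-j}$. I will expand the bracket as a power series $\sum_l b_l(p-j)\chi_n^{-l}$. Using Proposition 2.5.1 for $f_1^{p-j}$, and the analogous binomial-series expansion for $g_1^{-(p-j)}$, each $b_l$ should be a polynomial in $j$ of degree at most $l$.

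Multiplying out, $D_n\,\delta^{-p}\,\bE[(X_n-\mu_n)^p]$ becomes $\sum_{s\ge 0}\chi_n^{p-s}c_s$. Here $c_s$ is the signed sum $\sum_j {p\choose j}(-1)^{p-j}P_s(j)$, where
$$P_s(j)=\sum_{i+m+l=s}\delta^{-i}{j\brace j-i}\,a_{m,p}(j-i)\,b_l(p-j).$$
By Proposition 2.3.5 and hypothesis (3), $P_s$ is a polynomial in $j$ of degree at most $2i+2m+l\le 2s$. Its coefficient of $j^{2s}$ comes only from the terms with $l=0$, and equals
$$\sum_{i+m=s}\frac{\delta^{-i}}{2^i i!}\cdot\frac{w^m}{m!}=\frac{(w+\tfrac{1}{2\delta})^s}{s!},$$
since the shift $j\mapsto j-i$ does not change leading coefficients. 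By Proposition 2.4.2, $c_s=0$ whenever $2s<p$. When $2s=p$, $c_s=p!\,(w+\frac1{2\delta})^{p/2}/(p/2)!$. Terms with $2s>p$ carry the smaller power $\chi_n^{p-s}$, so only finitely many $s$ matter for each fixed $p$ and they are negligible.

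It follows that for even $p$ we get $\bE[(X_n-\mu_n)^p]\sim \delta^{p}\,\frac{p!}{(p/2)!}(w+\frac{1}{2\delta})^{p/2}\chi_n^{p/2}$. For odd $p$ we get $O(\chi_n^{(p-1)/2})$. The case $p=2$ gives $Var(X_n)\sim \delta(2\delta w+1)\chi_n$, which is nonzero by (1). Dividing by $Var(X_n)^{p/2}$ yields $p!/((p/2)!\,2^{p/2})$ for even $p$ and $0$ for odd $p$. Propositions 2.6.3 and 2.6.4 then finish the argument.

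The main obstacle is the bookkeeping in the middle step. One has to justify that the expansion of $\mu_n^{p-j}$ contributes coefficients of degree only $l$ (not $2l$) in $j$, so that it cannot affect the $j^{2s}$ coefficient. One also has to check that the truncated expansions are uniform in $j\le p$, so that the remainders really are $O(\chi_n^{p/2-1/2})$. I would handle this by expanding with only finitely many terms for each fixed $p$ and bounding the tails using $\chi_n\to\infty$.
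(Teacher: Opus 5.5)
Your proposal is correct and is essentially the paper's proof. You expand the centred moment through Stirling numbers of the second kind, organize the terms by the total drop $s=i+m+l$ in $\chi_n$-degree, use the degree bound $2s$ in $j$ together with Proposition 2.4.2 to kill every coefficient above $\chi_n^{p/2}$, read off the surviving coefficient from Proposition 2.3.5 and hypothesis (3), and take $p=2$ for the variance. The only difference is that the paper also clears $g_1(\chi_n)^p$, so that $\mu_n^{p-j}$ contributes the polynomial $f_1^{p-j}g_1^{j}$ handled directly by Proposition 2.5.1, whereas you expand $\mu_n^{p-j}$ as a series in $\chi_n^{-1}$; this works equally well with the finite truncation you describe, since $j$ takes only finitely many values for fixed $p$.
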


\begin{proof} Substituting $p = 1$ in (5) yields the mean $\bE[X_n] =  \frac{ \delta f_1(\chi_n)} {g_1(\chi_n)}$. Moreover, $f_0 = g_0 = 1$ so $\bE [(X_n)_p] =  \frac{ \delta^p f_p(\chi_n)}{g_p(\chi_n)}  \ \forall p \geq 0, n \in \bN$. Now fix any even $p \in \bN$ and any $n \in \bN$ with $n > p$.\\

\noindent \underline{Step 1: Expand $\bE [(X_n - \bE [X_n])^p] $}\\
Recall the definition of the Stirling numbers of the second kind. By linearity of expectation, we may write the raw moments as linear combinations of factorial moments: 
$$\bE[X_n^j] =  \sum \limits_{i=0}^j {j \brace j-i} \bE[(X_n)_{j-i}] \ \forall j \geq 0$$
We then expand the centered moments using the Binomial Theorem.
\begin{align*}
\bE [(X_n - \bE [X_n])^p] &= \sum \limits_{j = 0}^p {p \choose j} (-1)^{p-j} \bE[X_n]^{p-j} \sum \limits_{i=0}^j {j \brace j-i} \bE[(X_n)_{j-i}] \\
&= \frac{\sum \limits_{j = 0}^p {p \choose j} (-1)^{p-j} \sum \limits_{i=0}^j {j \brace j-i} \delta^{p-j} f_1(\chi_n)^{p-j} g_1(\chi_n)^j \cdot  \delta^{j-i} f_{j-i} (\chi_n) \prod \limits_{1 \leq h \leq p, h \neq j-i} g_h(\chi_n)  }{ g_1(\chi_n)^p \prod \limits_{1 \leq h \leq p} g_h(\chi_n) }\\
&:= \frac{\sum \limits_{j=0}^p { p \choose j} (-1)^{p-j} \sum \limits_{i=0}^j {j \brace j-i} C_{i,j} (n,p)}{D(n,p)} 
\end{align*}
where $D(n,p)$ and all $C_{i,j}(n,p)$ are clearly polynomials in $\chi_n$. In particular, the degree of $\chi_n$ in $D(n,p)$ is $e(p) := \deg_{\chi_n} D(n,p) = p r_1 + \sum \limits_{h=1}^p r_h$ and the degree of $\chi_n$ in $C_{i,j}(n,p)$ for any $0 \leq j \leq p, 0 \leq i \leq j$ is $d_{i}(p) := \deg_{\chi_n} C_{i,j}(n,p) = (p-i) + e(p)$ by a straightforward computation using (2). Observe that $\deg_{\chi_n} C_{i,j} (n,p) $ depends only on $i$ and $p$ and not on $j$.\\

\noindent \underline{Step 2: Compute the coefficient of $\chi_n^{d_{p/2}(p)}$ in $\sum \limits_{i=0}^j {j \brace j-i} C_{i,j} (n,p)$ for each fixed $0 \leq j \leq p$}\\
Fix $0 \leq j \leq p$.  If $i > j$ then $ {j \brace j-i} = 0$ and if $i > p/2$ then 
$$d_{i}(p) =  (p-i) + e(p) <  (p - p/2)+ e(p) = d_{p/2}(p) .$$
We may then see that $[\chi_n^{d_{p/2}(p)}] \sum \limits_{i=0}^j {j \brace j-i} C_{i,j} (n,p) = \sum \limits_{i=0}^{p/2} {j \brace j-i} [\chi_n^{d_{p/2}(p)}]  C_{i,j} (n,p)$, so the bounds of our sum need not depend on $j$. We will compute this coefficient as a polynomial in $j$ so that we can apply the signed binomial sum identity to get nice cancellations. \\

\noindent Recall that $ {j \brace j-i} = \frac{j^{2i}}{i! 2^i} +\rho_i(j)$ for some polynomial $\rho_i(x)$ of degree at most $2i-1$ from Proposition 2.3.5. We will compute $[\chi_n^{d_{p/2}(p)}] C_{i,j} (n,p)$ for each $0 \leq i \leq p/2$ as a polynomial in $j$ with coefficients depending on $i$ and $p$. By Proposition 2.4.2, we are only concerned with the coefficients of $j^m$ for $m \geq p-2i $ (*).\\

\noindent Fix any $0 \leq i \leq p/2$.  Observe that 
$$C_{i,j}(n,p) = \delta^{p-i} f_1(\chi_n)^{p-j} g_1(\chi_n)^j  f_{j-i} (\chi_n) \prod \limits_{1 \leq h \leq p, h \neq j-i} g_h(\chi_n)$$
has degree $d_{i}(p) = d_{p/2}(p)+p/2 - i$ in $\chi_n$. By definition of the $a_{m,p}(x)$, we have:
$$[\chi_n^{d_i(p) - p/2 + i}] C_{i,j}(n,p) = \delta^{p-i} \sum_{\substack{s_1 + s_2 + s_3 = \frac{p}2 - i \\ s_1,s_2,s_3 \geq 0}} [x^{(p-j)q_1-s_1}] f_1(x)^{p-j} \cdot [x^{j r_1-s_2}] g_1(x)^{j} \cdot  a_{s_3,p}(j-i) $$
(i.e. we ``lose" a factor of $\chi_n^{s_1}$ from $f_1(\chi_n)^{p-j}$, a factor of $\chi_n^{s_2}$ from $g_1(\chi_n)^{j}$, and a factor of $\chi_n^{s_3}$ from $f_{j-i} (\chi_n) \prod \limits_{1 \leq h \leq p, h \neq j-i} g_h(\chi_n)$, for all nonnegative tuples $(s_1,s_2,s_3$)). Note that we may include tuples $(s_1,s_2,s_3)$ that are outside the range of the corresponding degrees. \\

Consider the degree of $j$ in each summand, fixing any $s_1,s_2,s_3$. By Proposition 2.5.1, $[x^{(p-j)q_1-s_1}] f_1(x)^{p-j}$ is polynomial of degree at most $s_1$ in $p-j$ (so it is a polynomial of degree at most $s_1$ in $j$), and $[x^{j r_1-s_2}] g_1(x)^{j}$ is a polynomial of degree at most $s_2$ in $j$. Also, $a_{s_3,p} (j-i)$ is a polynomial of degree $2 s_3$ in $j-i$ (so it is a polynomial of degree $2s_3$ in $j$). Thus this summand is a polynomial in $j$ of degree at most
$$s_1 + s_2 + 2s_3 \leq 2s_1 + 2s_2 + 2s_3 = p- 2i .$$
By (*), the only relevant products in the sum are for tuples where the inequality is an equality. This only occurs when $s_1 = s_2 = 0$ and $s_3 = \frac{p}2 - i$. Since the leading coefficient of $a_{\frac{p}2-i, p}(x)$ is $\frac{w^{p/2- i}}{(p/2-i)!} \neq 0$ by (3), we have
$$[\chi_n^{d_{p/2}(p)}] C_{i,j}(n,p) = [\chi_n^{d_i(p) - p/2 + i}] C_{i,j}(n,p) = \frac{ \delta^{p-i}  w^{\frac{p}2 - i} }{(p/2-i)!} j^{p-2i} + \eta_i(j)  $$
for some polynomial $\eta_i(x)$ of degree at most $p-2i-1$.\\

\noindent\underline{Step 3: Compute the coefficient of $\chi_n^{d_{p/2}(p)}$ in the entire numerator and get some cancellations}\\
We use our above results to compute the desired coefficient. Note that $1 + \frac1{2 \delta w} \neq 0$ by (1).
\begin{align*} & [\chi_n^{d_{p/2}(p)}] \sum \limits_{j=0}^ p {p \choose j} (-1)^{p-j} \sum \limits_{i=0}^{j} {j \brace j-i} C_{i,j} (n,p) \\
&\quad = \sum \limits_{j=0}^ p {p \choose j} (-1)^{p-j}  \sum \limits_{i=0}^{p/2} \bigg( \frac{j^{2i}}{i! 2^i} + \rho_i(j) \bigg) \bigg (  \frac{(\delta^2 w)^{\frac{p}2} (\delta w)^{-i}}{(p/2 -i)!} j^{p-2i} + \eta_i(j)) \bigg) \\
&\quad = (\delta^2 w)^{p/2} \sum \limits_{j=0}^ p {p \choose j} (-1)^{p-j}  \sum \limits_{i=0}^{p/2} \frac{1}{i! (p/2 - i)!} \bigg (\frac1{(2 \delta w)^i} j^p + \sigma_i(j)  \bigg) \\
&\quad\qquad \text{for some polynomial $\sigma_i(x)$ of degree at most $p-1$} \\
&\quad = (\delta^2 w)^{p/2}  \sum \limits_{i=0}^{p/2} \frac{1}{i! (p/2 - i)!} \sum \limits_{j=0}^ p {p \choose j} (-1)^{p-j}  \bigg (\frac1{(2 \delta w)^i} j^p + \sigma_i(j)  \bigg) \\
&\quad = (\delta^2 w)^{p/2}  \sum \limits_{i=0}^{p/2} \frac{1}{i! (p/2 - i)!} \bigg(\frac1{(2 \delta w)^i} p!   \bigg) \ \text{by Proposition 2.4.2} \\
& \quad= (\delta^2 w)^{p/2} \frac{p!}{(p/2)!} \sum \limits_{i=0}^{p/2} {p/2 \choose i} \frac1{(2 \delta w)^i}  \\
& \quad= (\delta^2 w)^{p/2} \frac{p!}{(p/2)!} \bigg ( 1 + \frac1{2 \delta w} \bigg)^{p/2}
\end{align*}

\noindent\underline{Step 4: Compute $\bE[(X_n - \bE[X_n])^p]/ \bE[(X_n - \bE[X_n])^2]^{p/2}$ and take the limit}\\
We will use big-O notation for simplicity, with $f(n) \in O(g(n))$ meaning $\lim \limits_{n \rightarrow \infty} \bigg| \frac{f(n)}{g(n)}\bigg | \in [0, \infty)$. Observe that any term in the numerator of $ \bE[(X_n - \bE[X_n])^p] $ where the degree of $\chi_n$ is higher than $d_{p/2} (p)$ will yield a polynomial in $j$ of degree strictly less than $p-2i$ as the coefficient. These will simplify to 0 when summed over $j$ (by Proposition 2.4.2). Since the $g_t$'s are monic polynomials we get 
$$ \bE[(X_n - \bE[X_n])^p] = \frac{ \frac{p!}{(p/2)!}  (\delta^2 w)^{p/2} \bigg ( 1 + \frac1{2 \delta w} \bigg)^{p/2}  \chi_n^{d_{p/2}(p)} + O(\chi_n^{d_{p/2}(p)-1})}{ \chi_n^{e(p)}  + O(\chi_n^{e(p)-1})}  .$$
In particular, we see that $$ \ \bE[(X_n - \bE[X_n])^2] = \frac{ 2  (\delta^2 w) \bigg ( 1 + \frac1{2 \delta w} \bigg)  \chi_n^{d_{1}(2)} + O(\chi_n^{d_{1}(2)-1})}{ \chi_n^{e(2)}  + O(\chi_n^{e(2)-1})}  .$$
It follows that \[ \frac{ \bE[(X_n - \bE[X_n])^p] }{ \bE[(X_n - \bE[X_n])^2]^{p/2}} = \frac{ \frac{p!}{(p/2)!}  (\delta^2 w)^{p/2} \bigg ( 1 + \frac1{2 \delta w} \bigg)^{p/2} \chi_n^{d_{p/2}(p) + \frac{p}2 e(2) } + O(\chi_n^{d_{p/2}(p) + (p/2) e(2) -1})}{ \bigg[2 \delta^2 w   \bigg ( 1 + \frac1{2 \delta w} \bigg) \bigg]^{p/2}  \chi_n^{e(p) + \frac{p}2 d_1(2)} + O( \chi_n^{e(p) + \frac{p}2 d_1(2)-1} )}  \]
where 
\begin{align*} & d_{p/2}(p) + (p/2) e(2) - e(p) - (p/2) d_1(2) \\
&\quad = (p - p/2) + e(p) + (p/2) e(2) - e(p) - (p/2) [(2-1) + e(2)] \\
&\quad = 0 .\end{align*}
Since $\lim \limits_{n \rightarrow \infty} \chi_n = \infty$ by (4), we see that $ \lim \limits_{n \rightarrow \infty} \frac{ \bE[(X_n - \bE[X_n])^p] }{ \bE[(X_n - \bE[X_n])^2]^{p/2}} =  \frac{p!}{(\frac{p}2)! 2^{p/2}}$ as desired. \\

\noindent \underline{Step 5: Consider the case where $p$ is odd}\\
Now suppose $p$ is odd. Note $d_{\lfloor p/2 \rfloor} (p) + (p/2) e(2) < d_{p/2}  (p) + (p/2) e(2) = e(p) + (p/2) d_1(2)$ (since $d_i(p)$ is a strictly increasing function of $i$). Applying the same argument as above thus gives a strictly lower degree for the numerator than the denominator. It then follows that $ \frac{ \bE[(X_n - \bE[X_n])^p] }{ \bE[(X_n - \bE[X_n])^2]^{p/2}} = O \left( \frac1{\sqrt{\chi_n}} \right) $, and hence $\lim \limits_{n \rightarrow \infty} \frac{ \bE[(X_n - \bE[X_n])^p] }{ \bE[(X_n - \bE[X_n])^2]^{p/2}} = 0$.\\
\end{proof} 

\noindent Let us discuss the conditions (1) - (4) in Theorem 3.1.1. First, (1) is a technical condition ensuring that the coefficient of $\chi_n^{d_{p/2}(p)}$ in the numerator of $\bE[(X_n - \bE[X_n])^p]$ is nonzero (to avoid computing coefficients of lower powers of $\chi_n$). Asymptotic normality may still hold when (1) is false; in fact other normality theorems do not impose such a condition. Nevertheless, (1) will hold most of the time and the effort required to check that (1) holds is offset by the ease of use of this theorem. Next, condition (2) is simply a condition on the overall degree of $\chi_n$ in $\bE[(X_n)_p]$, which is common in normality theorems.
 
The essential condition that makes this technique work is condition (3), which states that the coefficients of $f_j(x) \prod \limits_{1 \leq h \leq p, h \neq j} g_h(x)$ are polynomials in $j$ with specific degrees and leading coefficients. These particular values yield both the cancellations of high degree terms and the appropriate coefficients for asymptotically normal moments. However, it may be unwieldy to calculate the coefficient of a product of many polynomials; it turns out that there is a simpler sufficient condition on the coefficients of $f_p(x), g_p(x)$ that will imply condition (3). Finally, condition (4) allows us to cover a wider range of cases with this theorem; however, in our examples we will only work with $\chi_n = n$.

\subsection{Simplifying (3)}
The following is Theorem 3.1.1 with new conditions (1), (3) and with $w \neq 0$ replaced by $w_1- w_2 \neq 0$ (where $w$ came from condition (3) and now $w_1, w_2$ come from condition (3)).
\begin{theorem}[3.2.1]
Let $w_1, w_2 \in \bC, \delta \in \bC \setminus \{0\}$ be constants with $w_1 \neq w_2$ (1) and with \\ 
$2 \delta (w_1-w_2) \neq -1$ (1). Let $(q_p)_{p \geq 0}, (r_p)_{p \geq 0}$ be nonnegative integers satisfying 
$$q_0 = r_0 = 0 \ \mbox{and} \ q_p - r_p = p \ \forall p \in \bN \ (2)$$
and for every $p \geq 0$ let $f_p(x), g_p(x)$ be monic complex polynomials of degrees $q_p, r_p$ respectively.

For every $m \geq 0, p \geq 0$ define $\zeta_m(p) := [x^{q_p - m}] f_p(x)$ and $\tau_m(p):= [x^{r_p-m}] g_p(x)$ (the $m\textsuperscript{th}$ highest coefficients of the $f_p(x), g_p(x)$). Suppose $\zeta_m(p), \tau_m(p)$ are polynomials in $p$ of degree $2m$ with leading coefficient $\frac{w_1^m}{m!}, \frac{w_2^m}{m!}$ respectively $\forall m, p \geq 0$ (3). 

Let $(\chi_n)_{n \in \bN}$ be complex numbers such that $\lim \limits_{n \rightarrow \infty} \chi_n = \infty$ (4). If $(X_n)_{n \in \bN}$ are real random variables satisfying
$$\bE [(X_n)_p] =  \frac{ \delta^p f_p(\chi_n)}{g_p(\chi_n)}  \ \forall p \in \bN, n \in \bN, n > p \ (5)$$
$$\mbox{then} \ \lim_{n \rightarrow \infty} \frac{\bE [(X_n - \bE [X_n])^p]} {\bE [(X_n - \bE [X_n])^2]^{p/2}}= 
\begin{cases}
\frac{p!}{(p/2)! 2^{p/2}} & \mbox{$p$ is even} \\
0 & \mbox{$p$ is odd} \\
\end{cases} \ \mbox{so} \ \frac{X_n - \bE[X_n]}{\sqrt{Var(X_n)}}  \xrightarrow{d} N(0,1).$$
(Note: One of $w_1,w_2$ may be 0 (but not both), making the corresponding polynomial in $j$ of degree at most $2m-1$. This does not affect the proof.)
\end{theorem}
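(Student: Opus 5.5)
The plan is to reduce Theorem 3.2.1 to Theorem 3.1.1 with $w := w_1 - w_2$. Conditions (2), (4), (5) are identical in the two theorems. Condition (1) of 3.1.1 requires $w \neq 0$ and $2\delta w \neq -1$, and these are exactly the two parts of the new (1). So everything comes down to verifying condition (3) of Theorem 3.1.1. Fix $p \geq 1$ and $m \geq 0$; we must show that
\[
a_{m,p}(j) = [x^{q_j - r_j + \sum_{h=1}^p r_h - m}]\, f_j(x) \prod_{1 \leq h \leq p,\, h \neq j} g_h(x)
\]
is, as a function of $j \in \{0,\ldots,p\}$, a polynomial of degree $2m$ with leading coefficient $\frac{(w_1-w_2)^m}{m!}$.

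First I expand the coefficient extraction. Since all factors are monic, $a_{m,p}(j)$ is a sum over tuples $(s_0, s_1, \ldots, s_p)$ of nonnegative integers with $s_0 + \sum_{h \neq j} s_h = m$ of $\zeta_{s_0}(j) \prod_{h \neq j} \tau_{s_h}(h)$. Out-of-range indices contribute $0$ by convention, and the theorem's hypothesis that $\zeta_m,\tau_m$ are polynomials for all $m$ handles them consistently. The $j$-dependence is awkward because of the excluded index $h = j$. To remove it, I introduce
\[
G(x) := \prod_{h=1}^p g_h(x),
\]
which does not depend on $j$, and write $f_j \prod_{h\neq j} g_h = G \cdot f_j / g_j$. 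I then work formally in the variable $y = 1/x$. Write $f_j(x) = x^{q_j}F_j(y)$ and $g_j(x) = x^{r_j}\Gamma_j(y)$, where $F_j(y) = \sum_s \zeta_s(j) y^s$ and $\Gamma_j(y) = \sum_s \tau_s(j) y^s$ are power series with constant term $1$. Then
\[
a_{m,p}(j) = [y^m]\, \widetilde G(y)\, F_j(y)\, \Gamma_j(y)^{-1},
\]
where $\widetilde G(y) = \sum_s c_s y^s$ is independent of $j$ with $c_0 = 1$. Hence $a_{m,p}(j) = \sum_{t=0}^m c_{m-t}\, b_t(j)$ with $b_t(j) := [y^t] F_j \Gamma_j^{-1}$. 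Since $c_0 = 1$, it suffices to show that each $b_t(j)$ is a polynomial in $j$ of degree at most $2t$, and that $b_m$ has degree exactly $2m$ with leading coefficient $\frac{(w_1-w_2)^m}{m!}$.

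The main step, and the one I expect to be the real obstacle, is this degree bookkeeping for $b_t(j)$. Expanding $\Gamma_j^{-1} = \sum_{k\ge0} (1-\Gamma_j)^k$ gives
\[
b_t(j) = \sum_{u + t_1 + \cdots + t_k = t,\ t_i \ge 1} \zeta_u(j)\, (-1)^k \prod_{i} \tau_{t_i}(j).
\]
By (3), each summand has degree at most $2u + 2\sum t_i = 2t$ in $j$. The coefficient of $j^{2t}$ is
\[
\sum_{u+\sum t_i = t} \frac{w_1^u}{u!} (-1)^k \prod_i \frac{w_2^{t_i}}{t_i!}.
\]
This is precisely $[y^t]$ of $e^{w_1 y}\cdot(e^{w_2 y})^{-1} = e^{(w_1-w_2)y}$, because the generating function of the leading coefficients of $\zeta$ and $\tau$ is $e^{w_1y}$ and $e^{w_2y}$ respectively. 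So the leading coefficient is $\frac{(w_1-w_2)^t}{t!}$.

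To make this rigorous without hand-waving, I will phrase it as follows. Let $\lambda_{2t}(\cdot)$ denote the coefficient of $j^{2t}$ in a polynomial of degree at most $2t$. The map sending a power series in $y$ whose $y^t$-coefficient is a polynomial in $j$ of degree at most $2t$ to $\sum_t \lambda_{2t} y^t$ is multiplicative. Multiplicativity holds because degrees add and leading coefficients multiply. This map sends $F_j \mapsto e^{w_1 y}$ and $\Gamma_j \mapsto e^{w_2 y}$, and therefore sends $\Gamma_j^{-1} \mapsto e^{-w_2 y}$.

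Finally, $w_1 \ne w_2$ ensures that $b_m$ has degree exactly $2m$. The terms $c_{m-t} b_t$ with $t<m$ have degree at most $2t < 2m$, so they do not affect the leading term, and condition (3) of Theorem 3.1.1 follows. The noted case where one of $w_1, w_2$ is $0$ is handled by the same multiplicative map, since a zero leading coefficient is simply allowed in that map. Theorem 3.1.1 then yields both the stated moment limits and the convergence $\frac{X_n - \bE[X_n]}{\sqrt{Var(X_n)}} \xrightarrow{d} N(0,1)$.
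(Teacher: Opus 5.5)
Your proposal is correct and takes essentially the paper's route: reduce to Theorem 3.1.1 with $w = w_1 - w_2$, handle the excluded index by passing through the $j$-independent product $\prod_{h=1}^p g_h$ divided by $g_j$, and identify the leading coefficient $\frac{(w_1-w_2)^m}{m!}$ by a binomial (exponential) convolution. The only difference is packaging: the paper splits $a_{m,p}(j)=\sum_{s+t=m}\zeta_s(j)\widetilde{\tau}_{p,t}(j)$ and asserts, via a sketched induction, that $\widetilde{\tau}_{p,t}$ has leading coefficient $\frac{(-w_2)^t}{t!}$, whereas your inversion of $\Gamma_j$ in $y=1/x$ together with the multiplicative leading-coefficient map carries out that induction explicitly.
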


\begin{proof} We clearly seek to apply Theorem 3.1.1 with $w = w_1 - w_2$ satisfying $2 \delta w \neq -1$. It thus suffices to check that condition (3) from Theorem 3.1.1 holds, where for every $m \geq 0, p \geq 1, 0 \leq j \leq p$ we let $a_{m,p}(j):= [x^{q_j - r_j + \sum \limits_{h=1}^p r_h-m} ]f_j(x) \prod \limits_{1 \leq h \leq p, h \neq j} g_h(x)$. First, we must deal with the product of the $g_h(x)$'s.\\

\noindent  Fix any $p \geq 1$. For every $e \geq 0$ and $0 \leq j \leq p$, let $\widetilde{\tau}_{p,e}(j) := [x^{\sum \limits_{h=1}^p r_p - r_j -e}] \prod \limits_{1 \leq h \leq p, h \neq j} g_p(x)$. Note that the coefficients of the polynomial $\prod \limits_{h=1}^p g_h(x)$ are constants.  By a straightforward induction argument based on extracting coefficients from the equation 
$$  \prod \limits_{h=1}^p g_h(x) = g_j(x) \prod \limits_{1 \leq h \leq p, h \neq j}^p g_h(x) $$
and by using (3) we can prove the following claim.\\
\textbf{Claim:}  $\widetilde{\tau}_{p,e}(j)$ is a polynomial in $j$  of degree $2e$ with leading coefficient $\frac{(- w_2)^e}{e!}$ for every fixed $e \geq 0$\\

For this same fixed $p$ we will compute $a_{m,p}(j)$ as polynomials in $j$. For any $m \geq 0$,
\begin{align*}
a_{m,p}(j) &= \sum_{s + t = m; s ,t \geq 0} [x^{q_j - s}] f_j(x) \cdot  [x^{\sum \limits_{h=1}^p r_h - r_j - t}] \prod \limits_{1 \leq h \leq p, h \neq j} g_h(x)\\
&= \sum_{s + t = m; s,t \geq 0} \zeta_{s}(j) \widetilde{\tau}_{p,t}(j)\\
&= \sum_{s + t = m; s,t \geq 0} \bigg[\frac{w_1^{s}}{s!} j^{2s} + \rho_s(j) \bigg] \bigg[\frac{(-w_2)^{t}}{t!} j^{2t} + \sigma_t(j) \bigg]
\end{align*}
for some polynomials $\rho_s(x), \sigma_t(x)$ whose respective degrees are at most $2s-1, 2t-1$
\begin{align*}
&= \frac{j^{2m}}{m!}  \bigg[ \sum_{s=0}^m {m \choose s} w_1^s (-w_2)^{m-s} \bigg] + \sigma(j) \qquad \qquad
\end{align*}
for some polynomial $\sigma(x)$ of degree at most $2m-1$
$$ = \frac{(w_1 -w_2)^m}{m!} j^{2m}+ \sigma(j) \qquad \qquad \qquad \qquad \quad$$
$$= \frac{w^m}{m!} j^{2m} + \sigma(j) \qquad \qquad \qquad \qquad \qquad \qquad$$
where $w = w_1 - w_2 \neq 0$. Thus condition (3) holds as well, and the result follows by Theorem 3.1.1.\\
\end{proof}

\noindent To summarize, it is sufficient to check that the coefficients of $f_p(x), g_p(x)$ (namely $\zeta_m(p), \tau_m(p)$) are polynomials of degree $2m$ with leading coefficients of the form $\frac{w_1^m}{m!}, \frac{w_2^m}{m!}$ respectively. This is much simpler than computing coefficients of $f_j(n) \prod \limits_{1 \leq h \leq p, h \neq j} g_h(n)$, and we will exclusively use Theorem 3.2.1 in the specific examples highlighted in section 4.

\subsection{The General Result Applied to Products of Falling Factorials}
In this section, we apply Theorem 3.2.1 to random variables whose factorial moments are products and quotients of falling factorials in $n$ and other polynomials in $n$ of ``sufficiently small degree" (this is made precise below). This result is useful because countless real random variables have factorial moments of this form.

\begin{corollary}[3.3.1]
Let $k, l \in \bZ^{\geq 0}, k + l \geq 1$, $\gamma_1,\ldots, \gamma_k, \gamma_1',\ldots, \gamma_l' \in \bN$, $\beta_1,\ldots, \beta_k,\beta_1',\ldots,\beta_l' \in \bC$, $\alpha_1,\ldots, \alpha_k, \alpha_1',\ldots,\alpha_l', \sigma \in \bC \setminus \{0\}$ with 
$$\sum \limits_{s = 1}^k \frac{\gamma_s^2}{\alpha_s} \neq \sum \limits_{t=1}^l \frac{\gamma_t'^2}{\alpha_t'} \ (1) \ \mbox{and} \ \prod \limits_{t=1}^l \alpha_t'^{\gamma_t'}   \neq  \sigma \prod \limits_{s=1}^k \alpha_s^{\gamma_s} \bigg( \sum \limits_{s=1}^k \frac{\gamma_s^2}{\alpha_s}  -  \sum \limits_{t=1}^l \frac{\gamma_t'^2}{\alpha_t'}  \bigg)  \ (2).$$
Let $(u_p)_{p \geq 0}, (v_p)_{p \geq 0}$ be nonnegative integers satisfying 
$$u_0 = v_0 = 0,  u_p - v_p = p (u_1 - v_1) \ \forall p \in \bN \ (3) \ \mbox{and} \ u_1 - v_1 +  \sum \limits_{s = 1}^k \gamma_s   -  \sum \limits_{t = 1}^l \gamma_t'= 1 \ (4)$$
and let $y_p(x), z_p(x)$ be monic complex polynomials of degrees $u_p, v_p$ respectively.

For every $m \geq 0, p \geq 0$ define $b_m(p):= [x^{u_p - m}] y_p(x)$ and $c_m(p):= [x^{v_p - m}] z_p(x)$ (the $m$\textsuperscript{th} highest coefficients of $y_p, z_p$, respectively). Suppose $b_m(x), c_m(x)$ are complex polynomials of degree at most $2m-1$ for all $m \geq 1$.

If $(X_n)_{n \in \bN}$ are real random variables satisfying
$$\bE [(X_n)_p] =  \frac{ \sigma^p y_p(n) \prod \limits_{s=1}^k (\alpha_s n + \beta_s)_{\gamma_s p }} {z_p(n) \prod \limits_{t=1}^l (\alpha_t' n + \beta_t')_{\gamma_t' p }}  \ \forall p \in \bN, n \in \bN, n > p \ (5)$$
then $\frac{X_n - \bE[X_n]}{\sqrt{Var(X_n)}}  \xrightarrow{d} N(0,1)$.
\end{corollary}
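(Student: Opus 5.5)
We deduce Corollary 3.3.1 from Theorem 3.2.1 with $\chi_n = n$. The first step is to put (5) into the monic form required there. Write
$$(\alpha n + \beta)_{\gamma p} = \alpha^{\gamma p} \prod_{i=0}^{\gamma p - 1}\Big(n + \tfrac{\beta - i}{\alpha}\Big).$$
Then set
$$f_p(x) := y_p(x)\prod_{s=1}^k \prod_{i=0}^{\gamma_s p-1}\Big(x+\tfrac{\beta_s-i}{\alpha_s}\Big), \qquad g_p(x) := z_p(x)\prod_{t=1}^l \prod_{i=0}^{\gamma_t' p-1}\Big(x+\tfrac{\beta_t'-i}{\alpha_t'}\Big),$$
and
$$\delta := \sigma\,\frac{\prod_s \alpha_s^{\gamma_s}}{\prod_t \alpha_t'^{\gamma_t'}}.$$
Both $f_p$ and $g_p$ are monic, with degrees $q_p = u_p + p\sum_s\gamma_s$ and $r_p = v_p + p\sum_t \gamma_t'$. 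By (3) and (4), $q_p - r_p = p(u_1-v_1+\sum\gamma_s-\sum\gamma_t') = p$ and $q_0=r_0=0$, which is condition (2) of Theorem 3.2.1. Condition (4) of that theorem is trivial for $\chi_n=n$.

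The main work is verifying condition (3) of Theorem 3.2.1 for the coefficients $\zeta_m(p)$ of $f_p$; the case of $g_p$ is identical. Take a single factor $h_p(x)=\prod_{i=0}^{\gamma p-1}(x+\frac{\beta-i}{\alpha})$. Its $m$th highest coefficient is the elementary symmetric polynomial $e_m\big(\frac{\beta-i}{\alpha}: 0\le i<\gamma p\big)$. Expanding, this is $\alpha^{-m}$ times a sum over choices of $m$ distinct indices, so it is a polynomial in $p$. Its top-degree part comes from $(-1)^m e_m(0,1,\dots,\gamma p-1) = \stirling{\gamma p}{\gamma p - m}$, up to sign. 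By Proposition 2.3.5 this is a polynomial in $\gamma p$ of degree $2m$ with leading coefficient $\frac{1}{2^m m!}$. The sign is absorbed by Definition 2.3.1: the coefficient $(-1)^m\stirling{\gamma p}{\gamma p-m}$ is multiplied by $(-1/\alpha)^m$ relative to the $\beta=0$ case, giving $(1/\alpha)^m$ times it. Lower-order terms involving $\beta$ come from mixing $\beta$-powers with $e_{m'}(0,\dots)$ for $m'<m$, which have degree at most $2m'+ (m-m') < 2m$. So the $m$th coefficient of $h_p$ is a polynomial in $p$ of degree $2m$ with leading coefficient $\frac{1}{m!}\big(\frac{\gamma^2}{2\alpha}\big)^m$, up to a uniform sign convention which must be checked carefully. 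This sign and normalization check is where I expect the real care to be needed.

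Next comes the multiplicativity step, which I would isolate as a lemma. Suppose $A_p, B_p$ are monic and their $m$th coefficients are polynomials in $p$ of degree $\le 2m$ with $[p^{2m}]$ equal to $\frac{a^m}{m!}$ and $\frac{b^m}{m!}$, where a leading coefficient of $0$ is allowed. Then the $m$th coefficient of $A_pB_p$ is $\sum_{s+t=m}$ of these products. Its $p^{2m}$ coefficient is $\sum_{s+t=m}\frac{a^s b^t}{s!t!}=\frac{(a+b)^m}{m!}$, exactly the binomial computation in the proof of Theorem 3.2.1. The hypothesis that $b_m$ and $c_m$ have degree $\le 2m-1$ means $y_p, z_p$ contribute $a=0$.

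Applying the lemma gives $w_1 = \sum_s \frac{\gamma_s^2}{2\alpha_s}$ and $w_2=\sum_t\frac{\gamma_t'^2}{2\alpha_t'}$. Hypothesis (1) of the Corollary gives $w_1\ne w_2$. The condition $2\delta(w_1-w_2)\neq -1$ becomes
$$\sigma\frac{\prod\alpha_s^{\gamma_s}}{\prod\alpha_t'^{\gamma_t'}}\Big(\sum\frac{\gamma_s^2}{\alpha_s}-\sum\frac{\gamma_t'^2}{\alpha_t'}\Big)\neq -1.$$
This should match hypothesis (2) once the signs from the previous paragraph are fixed, possibly with $w_i$ negated. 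That sign reconciliation is the obstacle: I would first compute $\stirling{\gamma p}{\gamma p-1}=\binom{\gamma p}{2}$ directly to pin down the sign of $w_1$ at $m=1$, and then trust Proposition 2.3.5 for general $m$. The note after Theorem 3.2.1 handles the case where one of $w_1, w_2$ is zero (for instance $k=0$), and Theorem 3.2.1 then yields the conclusion.
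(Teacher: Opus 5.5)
Your route is the paper's proof. You use the same monic normalization of $f_p,g_p$ and the same $\delta$. Your elementary-symmetric expansion with the $\beta$-shift is the paper's Stirling-plus-binomial expansion, including the bound $m+m'<2m$ for the $\beta$-terms. You also combine leading coefficients by the same binomial/multinomial identity.

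The one step you left open is wrong as written, and it is exactly where hypothesis (2) enters. For the top-degree ($\beta=0$) part, with $N=\gamma p$, we have $\prod_{i=0}^{N-1}(x-i/\alpha)=\alpha^{-N}(\alpha x)_N$. Definition 2.3.1 then gives the coefficient of $x^{N-m}$ directly as $(-1)^m\alpha^{-m}\stirling{N}{N-m}$. No further factor $(-1)^m$ arises, so your claim that the sign is ``absorbed'' counts the sign twice. Your proposed $m=1$ check confirms this: $\sum_{i=0}^{N-1}(\beta-i)/\alpha=(N\beta-\binom{N}{2})/\alpha$ has leading term $-\gamma^2p^2/(2\alpha)$. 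Hence the leading coefficient is $\frac1{m!}\bigl(-\frac{\gamma^2}{2\alpha}\bigr)^m$, and $w_1=-\frac12\sum_{s}\frac{\gamma_s^2}{\alpha_s}$, $w_2=-\frac12\sum_{t}\frac{\gamma_t'^2}{\alpha_t'}$, as in the paper.

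This sign is not cosmetic. Dividing hypothesis (2) by $\prod_t\alpha_t'^{\gamma_t'}\neq 0$ turns it into $\delta\bigl(\sum_s\frac{\gamma_s^2}{\alpha_s}-\sum_t\frac{\gamma_t'^2}{\alpha_t'}\bigr)\neq 1$. With your positive $w_i$ this only yields $2\delta(w_1-w_2)\neq 1$, not the $2\delta(w_1-w_2)\neq -1$ that Theorem 3.2.1 requires. With the correct signs, $2\delta(w_1-w_2)=-\delta\bigl(\sum_s\frac{\gamma_s^2}{\alpha_s}-\sum_t\frac{\gamma_t'^2}{\alpha_t'}\bigr)$, so the required condition is exactly (2), and (1) gives $w_1\neq w_2$. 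Once you fix this sign, your argument is complete and coincides with the paper's.
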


\begin{proof}
\underline{Step 1: Set-up the conditions of Theorem 3.2.1}\\
$$\mbox{Let} \ \chi_n = n \ \forall n \in \bN, \delta = \frac{\sigma \prod \limits_{s=1}^k \alpha_s^{\gamma_s}} {\prod \limits_{t=1}^l \alpha_t'^{\gamma_t'}}, f_p(x) = \frac{y_p(x) \prod \limits_{s=1}^k (\alpha_s x + \beta_s)_{\gamma_s p } }{ \bigg( \prod \limits_{s=1}^k \alpha_s^{\gamma_s} \bigg)^p},  g_p(x) = \frac{ z_p(x) \prod \limits_{t=1}^l (\alpha_t' x + \beta_t')_{\gamma_t' p }}{ \bigg( \prod \limits_{t=1}^l \alpha_t'^{\gamma_t'}\bigg)^p}$$
and $q_p = u_p + p \sum \limits_{s=1}^k \gamma_s, r_p = v_p + p \sum \limits_{t=1}^l \gamma_t'$ for all $p \geq 0$. We clearly have $q_0 = r_0 = 0$. Also, note that  $f_p(x), g_p(x)$ are monic polynomials of degrees $q_p, r_p$ respectively, that $\bE [(X_n)_p] =  \frac{ \delta^p f_p(n)}{g_p(n)}$ for all $p \geq 0, n \in \bN, n > p $, and that
$$q_p- r_p = u_p - v_p + p \sum \limits_{s=1}^k \gamma_s - p \sum \limits_{t=1}^l \gamma_t' = p \bigg(u_1 - v_1 + \sum \limits_{s=1}^k \gamma_s - \sum \limits_{t=1}^l \gamma_t' \bigg) = p \ \mbox{by (3) and (4)}.$$
For every $m,j \geq  0$ define $\zeta_m(j):= [x^{q_j- m}] f_j(x), \tau_m(j) := [x^{r_j - m}] g_j(x)$. \\
\underline{Step 2: Compute $\zeta_{m}(j)$ as a polynomial in $j$ for each fixed $m\geq 0$}\\
$$\mbox{We have} \ \zeta_m(j) = \sum_{\substack{m_0, \ldots, m_k \geq 0\\ m_0 + \ldots + m_k = m}} [x^{u_j - m_0}] y_j(x) \cdot \prod_{s=1}^k [x^{\gamma_s j - m_s}] \frac1{\alpha_s^{\gamma_s j}} (\alpha_s x + \beta_s)_{\gamma_s j } $$
Fix any such $(m_0,m_1,\ldots, m_k)$ and determine an expression for each of the coefficients in the corresponding summand. First, $[x^{u_j - m_0}] y_j(x) = b_{m_0}(j)$ which is a polynomial in $j$ of degree at most $\max(2m_0-1,0)$. On the other hand, for every $1 \leq s \leq k$ and $j \geq 0$,  
\begin{align*}
(\alpha_s x + \beta_s)_{\gamma_s j } &= \sum_{e=0}^{\gamma_s j} \stirling{\gamma_s j}{\gamma_s j - e} (-1)^{e} (\alpha_s x + \beta_s)^{\gamma_s j - e} \\
\Rightarrow  [x^{\gamma_s j - m_s}] \frac1{\alpha_s^{\gamma_s j}} (\alpha_s x + \beta_s)_{\gamma_s j } &= \frac1{\alpha_s^{\gamma_s j}} \sum_{e=0}^{m_s} \stirling{\gamma_s j}{\gamma_s j - e} (-1)^{e} [x^{\gamma_s j - m_s}] (\alpha_s x + \beta_s)^{\gamma_s j - e} 
\end{align*}
since $\stirling{\gamma_s j}{\gamma_s j - e} =0$ whenever $e > \gamma_s j$ and $[x^{\gamma_s j - m_s}] (\alpha_s x + \beta_s)^{\gamma_s j - e} =0$ whenever $e > m_s$
\begin{align*}
& \qquad  \qquad \qquad \qquad \qquad \qquad \qquad = \frac1{\alpha_s^{\gamma_s j}} \sum_{e=0}^{m_s} \stirling{\gamma_s j}{\gamma_s j - e} (-1)^{e} \alpha_s^{\gamma_sj -m_s} \beta_s^{m_s-e} {\gamma_s j - e \choose \gamma_s j -m_s}  \\
&\qquad  \qquad \qquad \qquad \qquad \qquad \qquad =  \sum_{e=0}^{m_s} \stirling{\gamma_s j}{\gamma_s j - e} (-1)^{e} \alpha_s^{ -m_s} \beta_s^{m_s-e} {\gamma_s j - e \choose \gamma_s j -m_s}  
\end{align*}
where each $\stirling{\gamma_s j}{\gamma_s j - e}$ is a polynomial in $\gamma_s j$ of degree $2e$ with leading coefficient $\frac1{2^e e!}$ by Proposition 2.3.5, i.e. it is a polynomial in $j$ of degree $2e$ with leading coefficient $\frac{\gamma_s^{2e}}{2^e e!}$. Moreover, by Remark 2.1.2, ${\gamma_s j - e \choose \gamma_s j -m_s}$ is a polynomial in $\gamma_s j - e$ of degree $m_s-e$ (i.e. it is a polynomial in $j$ of degree $m_s-e$). Thus each $\stirling{\gamma_s j}{\gamma_s j - e} (-1)^{e} \alpha_s^{-m_s} \beta_s^{m_s-e} {\gamma_s j - e \choose \gamma_s j -m_s} $ is a polynomial in $j$ of degree $m_s + e$. It follows that 
$[x^{\gamma_s j - m_s}]  \frac1{\alpha_s^{\gamma_s j}} (\alpha_s x + \beta_s)_{\gamma_s j }$ is a polynomial in $j$ of degree $2m_s$ (corresponding to the $e=m_s$ term of the sum) with leading coefficient $ (-1)^{m_s} \alpha_s^{-m_s} \frac{\gamma_s^{2m_s}}{2^{m_s} m_s!}$. Thus each summand is a polynomial in $j$ of degree  
$$\max(2m_0-1,0) + 2 \sum \limits_{s=1}^k m_s \leq 2 \sum \limits_{s=0}^k m_s = 2m,$$
and $\zeta_m(j)$ is a polynomial in $j$ of degree $2m$ with leading coefficient given by the sum over the tuples with $m_0 =0$, namely\\
\begin{align*} \sum_{\substack{m_1, \ldots, m_k \geq 0\\ m_1 + \ldots + m_k = m}} \prod_{s=1}^k \bigg[\alpha_s^{-m_s} (-1)^{m_s}  \frac{\gamma_s^{2m_s}}{2^{m_s} m_s!} \bigg] &=  \frac{(-1)^m}{2^m m!} \sum_{\substack{m_1, \ldots, m_k \geq 0\\ m_1 + \ldots + m_k = m}} {m \choose m_1, \ldots,m_k } \prod_{s=1}^k \bigg(\frac{\gamma_s^2}{\alpha_s} \bigg)^{m_s} \\
\quad &= \frac1{m!} \bigg(- \frac12 \sum \limits_{s=1}^k \frac{\gamma_s^2}{\alpha_s} \bigg)^m
\end{align*}
Note: it is possible to have $\sum \limits_{s=1}^k \frac{\gamma_s^2}{\alpha_s} =0$; this just means that $\zeta_m(j)$ is a polynomial in $j$ of degree at most $2m-1$.\\
\underline{Step 3: Compute $\tau_{m}(j)$ as a polynomial in $j$ for each fixed $m \geq 0$}\\
By the exact same argument as in Step 2, we get that $\tau_{m}(j)$ is a polynomial in $j$ of degree $2m$ with leading coefficient $\frac1{m!} \bigg(- \frac12 \sum \limits_{t=1}^l \frac{\gamma_t'^2}{\alpha_t'} \bigg)^m$. As before, it is possible that $\sum \limits_{t=1}^l \frac{\gamma_t'^2}{\alpha_t'} = 0$.\\
\underline{Step 4: Apply Theorem 3.2.1}\\
Let $w_1 = - \frac12 \sum \limits_{s=1}^k \frac{\gamma_s^2}{\alpha_s}$ and $w_2 = - \frac12 \sum \limits_{t=1}^l \frac{\gamma_t'^2}{\alpha_t'}$. By (1) we have $w_1-w_2 = \frac12 \bigg( - \sum \limits_{s=1}^k \frac{\gamma_s^2}{\alpha_s} + \sum \limits_{t=1}^l \frac{\gamma_t^2}{\alpha_t} \bigg) \neq 0$ and by (2) we have
$$2 \delta (w_1-w_2) = \frac{\sigma \prod \limits_{s=1}^k \alpha_s^{\gamma_s}} {\prod \limits_{t=1}^l \alpha_t'^{\gamma_t'}}  \bigg( - \sum \limits_{s=1}^k \frac{\gamma_s^2}{\alpha_s} + \sum \limits_{t=1}^l \frac{\gamma_t^2}{\alpha_t} \bigg) \neq -1.$$
All conditions of Theorem 3.2.1 are thus satisfied, and the conclusion follows.\\
\end{proof}

\noindent Note that Corollary 3.3.1 still holds with the $n$ replaced by $\chi_n$ (as in Theorem 3.2.1), but we will not need this in the next section. Moreover, in practice it turns out that many discrete random variables have factorial moments that are rational expressions with some falling factorials, which fit the situation described in Corollary 3.3.1. 

\begin{center}
\section{Applications}

\end{center}

\subsection{Basic Classical Applications}
In \cite{cumulants}, Bobecka et al. give the formulas for the factorial moments of some classical discrete distributions.
\begin{itemize}
\item Binomial Distribution: Fix $q \in (0,1)$. If $X_n \sim B(n,q)$ then $\bE[(X_n)_p] = (n)_p q^p \ \forall p \geq 0$
\item Negative Binomial Distribution:  Fix $q \in (0,1)$. If $X_n \sim NB(n,q)$ then \\ $\bE[(X_n)_p] = (-n)_p \bigg(\frac{q-1}q \bigg)^p$  $\forall p \geq 0$
\item Allocation of balls into boxes: Fix $\lambda, r \in \bR$, and sample uniformly from the distinct ways of allocating $n$ indistinguishable balls into $\lambda n$ distinguishable boxes. If $X_n$ is the number of boxes with exactly $r$ balls, then $\bE[(X_n)_p] = \frac{(\lambda n)_p (\lambda n - 1)_p (n)_{pr}}{(n+\lambda n - 1)_{p(r+1)}}$  $\forall p \geq 0$.
\end{itemize}
It is straightforward to check that all of these factorial moments satisfy the conditions in Corollary 3.3.1 and so we get $\frac{X_n - \bE[X_n]}{\sqrt{Var \ X_n}} \xrightarrow{d} N(0,1).$

\subsection{Lagrange's Implicit Function Theorem and Factorial Moments}
The following two theorems, discussed by Flajolet and Sedgewick in \cite{ancomb}, will simplify the computation of factorial moments in the examples we will discuss next.

\begin{theorem}[4.2.1] (Lagrange's Implicit Function Theorem)\\
Suppose $g(u) = u \phi(g(u))$ where $\phi(g)$ is a formal power series in $g$. Then  
$$[u^n] g(u) = \frac1n [x^{n-1}]  \phi(x)^n \ \forall n \in \bN.$$
\end{theorem}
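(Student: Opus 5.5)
This is the classical Lagrange inversion formula, and I will prove it in the ring of formal power series. The plan is to reduce the statement to a residue (coefficient of $x^{-1}$) computation in formal Laurent series. Write $\phi(x) = \sum_{k \ge 0} \phi_k x^k$. I assume, as is implicit in the statement, that $\phi_0 \neq 0$. The equation $g = u\phi(g)$ then determines a unique formal power series $g(u)$ with $g(0)=0$ and $[u^1]g = \phi_0 \neq 0$: the coefficients are found recursively, since $[u^n]g$ depends only on lower coefficients. In particular $g$ is compositionally invertible, and its inverse is $u = \psi(x) := x/\phi(x)$, which is a formal power series because $\phi_0 \ne 0$.

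The key tool is the formal residue $\mathrm{Res}\,F := [x^{-1}]F(x)$ for a Laurent series $F$. I will use two facts about it.
\begin{enumerate}
\item $\mathrm{Res}\,F' = 0$ for every Laurent series $F$. This holds because the derivative of $x^k$ is $kx^{k-1}$, and this vanishes when $k=0$, which is the only case that could produce an $x^{-1}$ term.
\item A change-of-variables rule. If $\psi$ is a power series with $\psi(0)=0$ and $\psi'(0)\neq 0$, then $\mathrm{Res}_u\, F(u) = \mathrm{Res}_x\, F(\psi(x))\psi'(x)$.
\end{enumerate}
To prove the second fact, by linearity it suffices to take $F(u)=u^k$. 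For $k\neq -1$, the integrand $\psi^k\psi'$ equals $(\psi^{k+1})'/(k+1)$, so both sides are $0$ by the first fact. For $k=-1$, write $\psi = cx(1+h(x))$ with $c \neq 0$ and $h(0)=0$. Then $\psi'/\psi = 1/x + h'/(1+h)$, and the second term is a power series, so its residue is $0$ and the residue of $\psi'/\psi$ is $1$, as required.

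Now I compute $[u^n]g = \mathrm{Res}_u\, u^{-n-1}g(u)$ by substituting $u=\psi(x)$, under which $g(\psi(x)) = x$. The change-of-variables rule gives
$$[u^n]g = \mathrm{Res}_x\, x\,\psi(x)^{-n-1}\psi'(x).$$
Next I integrate by parts. Since $\psi^{-n-1}\psi' = \left(\psi^{-n}\right)'/(-n)$, and since the first fact gives $\mathrm{Res}\,(x\psi^{-n})' = 0$, we have $\mathrm{Res}\, x(\psi^{-n})' = -\mathrm{Res}\,\psi^{-n}$. Therefore
$$[u^n]g = \frac1n \mathrm{Res}\,\psi(x)^{-n} = \frac1n [x^{-1}]\, x^{-n}\phi(x)^n = \frac1n [x^{n-1}]\phi(x)^n.$$

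The main obstacle is justifying the change-of-variables rule for residues, in particular the $k=-1$ case, where the logarithmic derivative must be handled carefully. Everything else is routine manipulation of formal series.
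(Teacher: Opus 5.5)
The paper does not prove Theorem 4.2.1; it quotes it from Flajolet and Sedgewick and uses it as a black box in Section 4.3, so there is no internal argument to compare against.

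Your proof is the standard formal-residue proof of Lagrange inversion, and it is correct and self-contained. The recursive construction of $g$ and the inverse $\psi(x)=x/\phi(x)$ are right. The two residue facts hold, with the $k=-1$ case handled properly via $\psi'/\psi=1/x+h'/(1+h)$. The substitution $u=\psi(x)$ and the integration by parts both check out, ending exactly at $\frac1n\,\mathrm{Res}\,\psi^{-n}=\frac1n[x^{n-1}]\phi(x)^n$.

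Three small points would tighten it.

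First, the statement as written does not assume $\phi_0\neq 0$, and you do not need to impose it. If $\phi_0=0$, the recursion forces $g=0$, while $\phi(x)^n$ is divisible by $x^n$, so both sides vanish.

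Second, what $g=u\phi(g)$ gives directly is $\psi\circ g=\mathrm{id}$, obtained by dividing by the unit $\phi(g)$. The identity $g(\psi(x))=x$ that you actually use needs one more line. Since $\psi$ has order exactly $1$ with invertible linear coefficient, it has a two-sided compositional inverse, and that inverse must equal $g$.

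Third, you reduce the change-of-variables rule to monomials ``by linearity'' for an infinite Laurent series such as $u^{-n-1}g(u)$. Note that $\psi^k\psi'$ is a power series for every $k\ge 0$. Hence only finitely many terms can contribute to the residue, and the reduction is legitimate.
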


\begin{definition}[4.2.2] The  \underline{probability generating function} of an integer-valued random variable $X$ is $G(w) := \bE[w^X]= \sum \limits_{i= - \infty}^{\infty} \mbox{Pr}(i) w^i$. 
\end{definition}

\begin{theorem}[4.2.3] If $X$ is an integer-valued random variable with probability generating function $G(w)$ then the $p\textsuperscript{th}$ factorial moment of $X$ is given by $\bE[(X)_p] = \frac{d^p}{dw^p} G(w) \bigg|_{w=1}$.
\end{theorem}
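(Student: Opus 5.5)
The statement to prove is Theorem 4.2.3, the factorial-moment formula for probability generating functions. I will argue by differentiating the series defining $G(w)$ term by term and then setting $w=1$. Throughout I will treat $X$ as taking values in $\bZ^{\geq 0}$, which is the situation in all our applications (counts of blocks or vertices). Then $G(w) = \sum_{i \geq 0} \Pr(X=i) w^i$ is a power series with nonnegative coefficients, and $G(1) = 1$, so its radius of convergence is at least $1$.

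\textbf{Step 1.} Inside the disc of convergence, I will differentiate $p$ times term by term. For $|w|<1$ this gives
$$G^{(p)}(w) = \sum_{i \geq 0} \Pr(X=i)\, i(i-1)\cdots(i-p+1)\, w^{i-p} = \sum_{i\ge 0}\Pr(X=i)\,(i)_p\, w^{i-p}.$$
The terms with $i<p$ drop out because $(i)_p = 0$ for those $i$, so only nonnegative powers of $w$ appear.

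\textbf{Step 2.} I will pass to $w = 1$. At $w=1$ the series in Step 1 formally becomes $\sum_i \Pr(X=i)(i)_p = \bE[(X)_p]$.

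The only real obstacle is justifying this substitution when the radius of convergence is exactly $1$. There I will read $G^{(p)}(1)$ as the left-hand limit $w \to 1^-$. All coefficients $\Pr(X=i)(i)_p$ are nonnegative, so Abel's theorem together with monotone convergence gives
$$\lim_{w\to1^-}G^{(p)}(w)=\sum_i \Pr(X=i)(i)_p = \bE[(X)_p].$$
This identity holds in $[0,\infty]$, so it is valid whether or not the factorial moment is finite.

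If the radius of convergence exceeds $1$, which happens in our examples where $X_n$ is bounded and $G$ is a polynomial, then $G$ is analytic at $1$ and the substitution is immediate.

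If one wants to allow general integer-valued $X$, the same argument applies with $G$ read as a Laurent series convergent on an annulus containing $|w|=1$. Term-by-term differentiation there gives the same formula $\sum_i \Pr(X=i)(i)_p w^{i-p}$. Again all that is needed is convergence at $w=1$, which holds whenever $\bE[|(X)_p|]<\infty$.
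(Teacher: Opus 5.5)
The paper gives no proof of Theorem 4.2.3. It is quoted from \cite{ancomb}, and it is only ever applied to bounded $\bZ^{\geq 0}$-valued variables (numbers of blocks or vertices). For those, $G$ is a polynomial and the formula is just $\frac{d^p}{dw^p}w^i=(i)_p w^{i-p}$ summed over finitely many $i$. For $\bZ^{\geq 0}$-valued $X$ your argument is the standard one and is correct. Two small points:
\begin{itemize}
\item In Step 2, monotone convergence alone does the job, since the coefficients $\Pr(X=i)(i)_p$ are nonnegative and $w^{i-p}\uparrow 1$ as $w\uparrow 1$ for $i\geq p$. Abel's theorem is superfluous.
\item Your reading of $G^{(p)}(1)$ as $\lim_{w\to 1^-}G^{(p)}(w)$ agrees with the iterated left derivative at $1$. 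This follows from the mean value theorem applied to $G^{(p-1)}$, which is continuous on $[0,1]$ (with value $\bE[(X)_{p-1}]$ at $1$) whenever that factorial moment is finite.
\end{itemize}

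The real problem is your final paragraph. Being integer-valued does not make $G$ a Laurent series on an open annulus around $|w|=1$. The series $\sum_i \Pr(X=i)w^i$ always converges absolutely on $|w|=1$, but it may converge nowhere else. Take $\Pr(X=i)=\frac{45}{\pi^4}|i|^{-4}$ for $i\neq 0$. Then $\bE[X]=0$ and $\bE[X^2]<\infty$. Yet $\sum_{i\geq 1}i^{-4}w^i$ diverges for real $w>1$, and $\sum_{i\geq 1}i^{-4}w^{-i}$ diverges for $0<w<1$. So $G$ is finite on the positive real axis only at $w=1$, and $G'(1)$ does not exist.

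Your closing claim is therefore false. You say all that is needed is convergence at $w=1$, and that this follows from $\bE[|(X)_p|]<\infty$. But convergence of the differentiated series at the single point $w=1$ says nothing about differentiability of $G$ there.

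For general integer-valued $X$, the statement itself needs an extra hypothesis. For instance, require that the positive tail converges for some $w>1$, or the negative tail for some $w<1$. Under that hypothesis your argument does go through. You approach $1$ from the side on which $G$ is defined, and use that $(i)_p$ has constant sign $(-1)^p$ for $i<0$, so monotone convergence still controls the negative tail. None of this affects the paper, which needs only the bounded nonnegative case.
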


\subsection{$k$-Divisible Non-Crossing Partitions}
As an example of random variables with factorial moments that satisfy the conditions of Corollary 3.3.1, we will look at statistics on $k$-divisible non-crossing partitions, discussed by Arizmendi in \cite{arizmendi}.

\begin{definition}[4.3.1] For any $n \in \bN$, a \underline{partition} of $\{1,\ldots, n\}$ is a set $\pi = \{V_1, \ldots, V_r\}$ where $\emptyset \neq V_j \subseteq \{1, \ldots, n\}$ are pairwise disjoint and $\bigcup \limits_{j=1}^{r} V_j = \{1,\ldots, n\}$. $V_j$ are the \underline{blocks} of $\pi$. \\
A partition $\pi$ of $\{1,\ldots, n\}$ is called \underline{non-crossing} if there do not exist $1 \leq a < b < c <d \leq n$ with $a,c$ in the same block $V$ of $\pi$ and with $b,d$ in the same block $W$ of $\pi$ where $V \neq W$. \\
For any $k \geq 1$, a partition $\pi$ is \underline{$k$-divisible} if the size of each block of $\pi$ is divisible by $k$. The set of $k$-divisible non-crossing partitions of $\{1,\ldots,kn\}$ is denoted by $NC^k(n)$.  
\end{definition}

For example, the following are a (1-divisible) non-crossing partition of $\{1,\ldots, 8\}$ and a 3-divisible non-crossing partition of $\{1,\ldots, 6\}$:
\begin{center}
\begin{tikzpicture}[thick,font=\small]
     \path (0,0) node (a) {1}
           (1,0) node (b) {2}
           (2,0) node (c) {3}
           (3,0) node (d) {4}
           (4,0) node (e) {5}
           (5,0) node (f) {6}
           (6,0) node (g) {7}
           (7,0) node (h) {8};
     \draw (a) -- +(0,0.75) -| (d);
     \draw (b) -- +(0,0.5) -| (c);
     \draw (f) -- +(0,0.5) ;
     \draw (e) -- +(0,0.75) -| (g);
     \draw (h) -- +(0,0.75) ;
   \end{tikzpicture}
\end{center}
\begin{center}
\begin{tikzpicture}[thick,font=\small]
     \path (0,0) node (a) {1}
           (1,0) node (b) {2}
           (2,0) node (c) {3}
           (3,0) node (d) {4}
           (4,0) node (e) {5}
           (5,0) node (f) {6};
     \draw (a) -- +(0,0.75) -| (b);
     \draw (b) -- +(0,0.75) -| (f);
     \draw (c) -- +(0,0.5) -| (d);
     \draw (d) -- +(0,0.5) -| (e);
   \end{tikzpicture}
\end{center}

To use the theorems in section 4.2, we first need a generating function $g$ for non-crossing partitions that satisfies $g(u) = u \phi(g(u))$.

\begin{proposition}[4.3.2] $f(u,w_1,w_2,\ldots) = \sum \limits_{j=1}^{\infty} u^j w_j (1+f(u,w_1,w_2,\ldots) )^j$ is a generating function for all non-crossing partitions (of all sizes), where the weight of $u$ is the size and the weight of $w_j$ is the number of blocks of size $j$ (for all $j \geq 1$).

\end{proposition}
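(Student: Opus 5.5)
We prove Proposition 4.3.2 by a first-block decomposition. Write $F = 1 + f$ for the generating function that also counts the empty partition (of the empty set), with weight $u^0$ and no $w$'s. The claim is equivalent to the functional equation
$$F = 1 + \sum_{j \geq 1} u^j w_j F^j,$$
together with the fact that this equation determines $F$ uniquely as a formal power series in $u$ with coefficients in $\bC[w_1,w_2,\ldots]$. Uniqueness is immediate: the coefficient of $u^N$ on the right involves only coefficients of $u^{M}$ in $F$ with $M \leq N-1$, since each term carries at least one factor $u^j$ with $j \geq 1$. So it suffices to show that the combinatorial generating function $F(u) = \sum_{\pi} u^{|\pi|} \prod_{V \in \pi} w_{|V|}$, summed over all non-crossing partitions of $\{1,\ldots,N\}$ for $N \geq 0$, satisfies this equation.

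\textbf{Bijection.} For a nonempty non-crossing partition $\pi$ of $\{1,\ldots,N\}$, let $V = \{v_1 < v_2 < \cdots < v_j\}$ be the block containing $v_1 = 1$. Set $v_{j+1} := N+1$. For each $1 \leq i \leq j$, the gap $G_i = \{v_i+1, \ldots, v_{i+1}-1\}$ is a (possibly empty) interval of consecutive integers. We check that every other block $W \neq V$ lies entirely inside a single gap. Suppose $W$ had elements $b \in G_i$ and $d \in G{i'}$ with $i < i'$. Then $v_i < b < v_{i+1} < d$ with $v_i, v_{i+1} \in V$ and $b, d \in W$. This is a crossing, which contradicts that $\pi$ is non-crossing.

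Consequently, $\pi$ restricted to each $G_i$ is a non-crossing partition of that interval. Relabel it order-preservingly to $\{1,\ldots,|G_i|\}$. This gives a map
$$\pi \mapsto \bigl(j; \pi_1, \ldots, \pi_j\bigr),$$
where each $\pi_i$ is an arbitrary non-crossing partition, possibly empty.

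For the converse, given $j \geq 1$ and non-crossing partitions $\pi_1,\ldots,\pi_j$, define $V$ by placing $v_1 = 1$ and $v_{i+1} = v_i + |\pi_i| + 1$ for $i < j$. Insert a shifted copy of $\pi_i$ into gap $G_i$. This recovers $\pi$. The result is non-crossing for the following reasons. Blocks within a single gap are non-crossing by hypothesis. Blocks in different gaps occupy disjoint intervals, and two blocks on disjoint intervals cannot cross. Finally, $V$ cannot cross a block $W$ contained in $G_i$: the pattern $a < b < c < d$ would need two elements of $V$ and two of $W$ interleaved, but no element of $V$ lies strictly between two elements of $G_i$. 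These two maps are mutually inverse.

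\textbf{Weights.} Under the bijection the size satisfies $N = j + \sum_i |\pi_i|$. Each block of $\pi$ other than $V$ is a block of exactly one $\pi_i$, with the same size. The block $V$ contributes a factor $w_j$. Hence the weight of $\pi$ equals
$$u^j w_j \prod_{i=1}^{j} \Bigl( u^{|\pi_i|} \prod_{W \in \pi_i} w_{|W|} \Bigr).$$
Summing over all $j$ and all tuples $(\pi_1,\ldots,\pi_j)$ gives
$$F - 1 = \sum_{j \geq 1} u^j w_j F^j.$$
This is exactly the stated equation with $f = F - 1$.

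The main point needing care is that the gaps, including the final gap after $v_j$, are the correct "slots". It is essential to include the last gap $G_j = \{v_j+1,\ldots,N\}$, so that there are $j$ factors of $F$ rather than $j-1$. Anchoring on the block containing $1$ is what makes the count come out to exactly $j$ intervals. Everything else in the argument is routine bookkeeping.
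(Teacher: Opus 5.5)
Your proposal is correct and follows the same first-block decomposition as the paper: you split by the block containing $1$ into $j$ pockets, counting the final pocket after its largest element, to get $1+f=1+\sum_{j\ge1}u^jw_j(1+f)^j$. You also supply details the paper leaves implicit: the crossing argument showing every other block lies in a single gap, the inverse construction with its non-crossing check, and the uniqueness of the formal power series solution.
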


\begin{proof}
Consider any non-crossing partition $\pi$ (of some size), let $V_1 \in \pi$ be the block containing 1 and let $j := |V_1|$ so $j \geq 1$. Observe that because of the non-crossing condition, $V_1$ splits $\pi$ into $j$ disjoint (possibly empty) ``pockets", each of which is either empty or contains a non-crossing partition of some (positive) size. For example, a non-crossing partition (of some arbitrary size) with $V_1 = \{1,4,5,7\}$ can be decomposed as shown below.

\begin{center}
\begin{tikzpicture}[thick,font=\small]
     \path (0,0) node (a) {1}
           (1,0) node (b) {2}
           (2,0) node (c) {3}
           (3,0) node (d) {4}
           (4,0) node (e) {5}
	(1.5, -1) node {pocket}
	(1.5, -1.5) node {of size 2}
	(3.5, -1) node {pocket}
	(3.5, -1.5) node {of size 0}
	(5.2, -1) node {pocket}
	(5.2, -1.5) node {of size 1}
	(8, -1) node {pocket}
	(8, -1.5) node {of some size}
           (5,0) node (f) {6}
           (6,0) node (g) {7}
           (9,0) node (inv) {}
           (7,0) node (h) {8}
           (8,0) node (rest) {...};
     \draw (a) -- +(0,0.75) -| (d);
     \draw (e) -- +(0,0.75) -| (g);
     \draw (d) -- +(0,0.75) -| (e);
\draw [ decoration={
        brace,
        mirror,
        raise=0.4cm
    },  decorate] (a) -- (d) ;
\draw [ decoration={
        brace,
        mirror,
        raise=0.4cm
    },  decorate] (d) -- (e) ;
\draw [ decoration={
        brace,
        mirror,
        raise=0.4cm
    },  decorate] (e) -- (g) ;
\draw [ decoration={
        brace,
        mirror,
        raise=0.4cm
    },  decorate] (h) -- (inv) ;

   \end{tikzpicture}
\end{center}

Therefore a generating function for all non-crossing partitions (of all sizes) with $u$ recording the size and $w_j$ counting the blocks of size $j$ (for all $j \geq 1$) is
$$ f(u,w_1,w_2,\ldots) = \sum \limits_{j=1}^{\infty} u^j w_j (1+f(u,w_1,w_2,\ldots) )^j$$
where the $(1+f)^j$ represent these $j$ ``pockets" (which are either empty or a copy of $f$), where $u^j$ comes from the $j$ elements in $V_1$ and where $w_j$ comes from the block $V_1$ of size $j$.\\
\end{proof}

It follows that for a fixed $k \in \bN$, the generating function for all $k$-divisible non-crossing partitions is 
$$f_k(u,w_1,w_2,\ldots) = \sum \limits_{j=1}^{\infty} u^{jk} w_{jk} (1+f_k(u,w_1,w_2,\ldots) )^{jk}.$$
We want to apply Lagrange's Implicit Function Theorem on $g_k(u,w_1,\ldots) := u(1+f_k(u,w_1,\ldots))$ and $\phi(x) := 1 +  \sum \limits_{j=1}^{\infty} w_{jk} x^{jk} $ so that
$$g_k(u,w_1,\ldots) = u(1+f_k(u,w_1,\ldots)) = u \bigg(1 +  \sum \limits_{j=1}^{\infty}w_{jk} g_k(u,w_1,w_2,\ldots)^{jk} \bigg) = u \phi(g_k(u,w_1,\ldots)).$$

\begin{proposition}[4.3.3]
$|NC^k(n)| = \frac1{kn+1} {(k+1)n \choose n} = C_n^{(k)} \ \forall n, k \in \bN$, which are known as the Fuss-Catalan numbers.
\end{proposition}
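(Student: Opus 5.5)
The plan is to read off $|NC^k(n)|$ as a coefficient of the generating function $f_k$ after specializing all block weights to $1$, and then to extract that coefficient with Lagrange's Implicit Function Theorem (Theorem 4.2.1). Concretely, set $w_{jk}=1$ for all $j\ge 1$ and let all other $w_i$ vanish. Then $f_k(u) := f_k(u,1,1,\ldots)$ counts all $k$-divisible non-crossing partitions, with $u$ recording the size. Since partitions in $NC^k(n)$ live on $\{1,\ldots,kn\}$, we have $|NC^k(n)| = [u^{kn}] f_k(u)$. Under this specialization,
$$\phi(x) = 1+\sum_{j\ge1} x^{jk} = \frac{1}{1-x^k}$$
as a formal power series, and $g_k(u) = u(1+f_k(u))$ satisfies $g_k = u\,\phi(g_k)$. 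This functional equation follows from the recursive decomposition given right after Proposition 4.3.2.

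Next, relate the desired coefficient to one of $g_k$. Because $1+f_k = g_k/u$, we have
$$|NC^k(n)| = [u^{kn}](1+f_k) = [u^{kn+1}] g_k(u) \quad \text{for } n\ge1.$$
Theorem 4.2.1 with $N = kn+1$ then gives
$$[u^{kn+1}]g_k(u) = \frac{1}{kn+1}[x^{kn}](1-x^k)^{-(kn+1)}.$$

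It remains to expand $(1-x^k)^{-(kn+1)} = \sum_{i\ge0}\binom{kn+i}{i}x^{ki}$. This is the generalized binomial theorem, i.e.\ the Binomial Theorem of Remark 2.1.2 with negative exponent, combined with the sign convention of Definition 2.1.1. The coefficient of $x^{kn}$ comes from $i=n$ and equals $\binom{kn+n}{n} = \binom{(k+1)n}{n}$. This yields $|NC^k(n)| = \frac{1}{kn+1}\binom{(k+1)n}{n}$, as claimed.

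The main point needing care is the correctness of the functional equation for $g_k$ in the specialized setting. One must check that the pocket decomposition of Proposition 4.3.2 restricts to $k$-divisible partitions. Each pocket is itself a non-crossing partition of a contiguous set of points, so it is again $k$-divisible, since every one of its blocks is a block of $\pi$. Each pocket has size divisible by $k$, which is consistent with the total size being $kn$. A second point of care is the index shift: a coefficient of $f_k$ must be converted into one of $g_k$ by a single power of $u$, and Theorem 4.2.1 is applied with index $kn+1$ rather than $kn$. Everything else is a routine coefficient extraction.
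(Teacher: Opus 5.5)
Your proposal is correct and follows essentially the same route as the paper: you specialize the weights to $1$, rewrite $[u^{kn}]f_k$ as $[u^{kn+1}]g_k$, apply Lagrange inversion with $\phi(x)=(1-x^k)^{-1}$, and extract the coefficient of $x^{kn}$ via the negative binomial expansion (the paper first substitutes $y=x^k$, which is the same computation). Your check that the pocket decomposition preserves $k$-divisibility fills in a step that the paper only asserts.
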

\begin{proof}
Fix $k \in \bN$. Note that $|NC^k(n)| $ is just the coefficient of $u^{kn}$ in $f_k(u,1,\ldots)$. Set all $w_{jk}$ to 1. To simplify notation, let $f_k(u) := f_k(u,1,1,\ldots$), let $g_k(u) := g_k(u,1,1,\ldots)$ and let $\phi(x) := 1 + \sum \limits_{j=1}^{\infty} x^{jk} = (1-x^k)^{-1}$ so that $g_k(u) = u ( 1+ f_k(u)) = u \phi(g_k(u))$. Then
\begin{align*}
|NC^k(n)| &= [u^{kn}] f_k(u)\\
&= [u^{kn+1}] g_k(u)\\
&= \frac1{kn+1} [x^{kn}] \phi(x)^{kn+1} \ \mbox{by Theorem 4.2.1}\\
&= \frac1{kn+1} [x^{kn}] (1-x^k)^{-(kn+1)}\\
&= \frac1{kn+1} [y^{n}] (1-y)^{-(kn+1)}
\end{align*}
\begin{align*}
\qquad \qquad \qquad \qquad \qquad &= \frac1{kn+1} {-(kn+1) \choose n} (-1)^{n} \ \mbox{by the Binomial Theorem}\\
&= \frac1{kn+1} {(kn+1) + n - 1 \choose n}\\
&=\frac1{kn+1} {(k+1)n \choose n}
\end{align*}
\end{proof}

\noindent First, for any $n, k \in \bN$, consider the random variable $X_n^{(k)}(\pi)$ counting the number of blocks of $\pi$ (with $\pi \in NC^k(n)$ chosen uniformly at random). We will compute the factorial moments and then apply Corollary 3.3.1.

\begin{proposition}[4.3.4]
$$\bE[(X_n^{(k)})_p] = \frac{(kn+1)_p  (n)_{p}} {(k n+ n)_{ p}} \ \forall n,k,p \in \bN \ \mbox{with} \ p \leq nk$$
\end{proposition}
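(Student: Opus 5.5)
The plan is to compute the probability generating function of $X_n^{(k)}$ from the block-counting generating function of Proposition 4.3.2, extract its coefficient with Lagrange's Implicit Function Theorem 4.2.1, and differentiate at $w=1$ using Theorem 4.2.3.

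\textbf{Marking blocks.} Give every block the same weight $w$ by setting $w_{jk}=w$ for all $j \geq 1$ in $f_k(u,w_1,w_2,\ldots)$. Write $f_k(u;w)$ and $g_k(u;w)=u(1+f_k(u;w))$ for the resulting series. Then $g_k = u\phi(g_k)$ with
$$\phi(x) = 1 + w\sum_{j\ge1}x^{jk} = 1+\frac{w x^k}{1-x^k} = \frac{1+(w-1)x^k}{1-x^k}.$$
Exactly as in the proof of Proposition 4.3.3, set $N:=kn+1$. Then
$$T_n(w) := \sum_{\pi\in NC^k(n)} w^{\#\text{blocks}(\pi)} = [u^{kn}]f_k = [u^{N}]g_k = \frac1{N}[x^{kn}]\phi(x)^{N} = \frac1{N}[y^n]\,(1+(w-1)y)^{N}(1-y)^{-N},$$
where the last step substitutes $y=x^k$. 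The probability generating function is therefore $G(w)=T_n(w)/T_n(1)$, and $T_n(1)=C_n^{(k)}$ by Proposition 4.3.3.

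\textbf{Differentiating.} By Theorem 4.2.3, $\bE[(X_n^{(k)})_p]=G^{(p)}(1)$. Coefficient extraction in $y$ commutes with differentiation in $w$, since for fixed $n$ everything is a polynomial in $w$. In the product, only the factor $(1+(w-1)y)^N$ depends on $w$, and its $p$-th derivative is $(N)_p\,y^p(1+(w-1)y)^{N-p}$, which equals $(N)_p y^p$ at $w=1$. Hence
$$T_n^{(p)}(1)=\frac{(N)_p}{N}[y^{n-p}](1-y)^{-N}=\frac{(kn+1)_p}{kn+1}\binom{kn+n-p}{n-p},$$
using the negative binomial expansion from the proof of Proposition 4.3.3. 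This is where $p\le n$ is needed for a nonzero coefficient; when $n<p\le nk$ both sides of the claim vanish, because of the factor $(n)_p$.

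\textbf{Simplifying.} Dividing by $C_n^{(k)}=\frac1{kn+1}\binom{(k+1)n}{n}$ gives
$$\bE[(X_n^{(k)})_p]=(kn+1)_p\,\frac{\binom{kn+n-p}{n-p}}{\binom{kn+n}{n}},$$
and the elementary identity $\binom{M-p}{n-p}/\binom{M}{n}=(n)_p/(M)_p$ with $M=kn+n$ yields the stated formula.

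\textbf{Main obstacle.} The only real subtlety is justifying that Theorem 4.2.1 applies with $w$ as an extra formal parameter. Since $\phi$ is a formal power series in $x$ with coefficients polynomial in $w$ and $\phi(0)=1$, Lagrange inversion holds coefficientwise in $w$. Beyond that, the computation is routine.
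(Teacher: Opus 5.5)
Your proposal is correct and follows essentially the same route as the paper: mark every block with $w$, apply Lagrange inversion to $g_k=u\phi(g_k)$, differentiate $p$ times at $w=1$, and reduce to $[y^{n-p}](1-y)^{-(kn+1)}$ before simplifying the binomial ratio. The only difference is cosmetic. You factor $\phi=(1+(w-1)x^k)/(1-x^k)$ to isolate the $w$-dependence before differentiating, whereas the paper differentiates $\bigl(1+wx^k/(1-x^k)\bigr)^{kn+1}$ directly and simplifies afterwards.
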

\begin{proof}
Fix any such $n, k,p$. Set $w_{jk} = w \ \forall j \geq 1$. For a fixed $w$, to simplify notation, let $f_k(u) := f_k(u,w,w,\ldots)$, let $g_k(u) := g_k(u,w,w,\ldots)$ and let $\phi(x) = 1 + \sum \limits_{j=1}^{\infty} w x^{jk} = 1 + \frac{wx^k}{1-x^k}$ so that $g_k(u) = u(1+f_k(u)) = u \phi(g_k(u))$. The probability generating function of $X_n^{(k)}$ is 
$G(w) = \frac1{C_n^{(k)}} [u^{kn}] f_k(u) =\frac1{C_n^{(k)}} [u^{kn+1}] g_k(u) = \frac1{C_n^{(k)}} [x^{kn}] \frac1{kn+1} \phi(x)^{kn+1}$ (by Theorem 4.2.1), so
\begin{align*}
\bE[(X_n^{(k)})_p] &=  \frac{d^p}{dw^p} \bigg( \frac1{C_n^{(k)}}  [x^{kn}] \frac1{kn+1} \phi(x)^{kn+1} \bigg) \bigg|_{w=1} \ \mbox{by Theorem 4.2.3} \\
&= \frac1{C_n^{(k)} (kn+1)}  [x^{kn}] \frac{d^p}{dw^p} \bigg( 1 + \frac{wx^k}{1-x^k} \bigg)^{kn+1} \bigg |_{w=1}\\
&= \frac1{C_n^{(k)}(kn+1)}  [x^{kn}] \bigg(1 + \frac{x^k}{1-x^k}  \bigg)^{kn+1 - p} (kn+1)_p \bigg( \frac{x^k}{1-x^k}  \bigg)^p \\
&= \frac{(kn+1)_p}{C_n^{(k)}(kn+1)}  [x^{kn- pk}] (1-x^k)^{-(kn+1 )}  \\
&= \frac{(kn+1)_p}{C_n^{(k)}(kn+1)}  [y^{n- p}] (1-y)^{-(kn+1)}  \\
&= \frac{(kn+1)_p}{C_n^{(k)}(kn+1)} {-(kn+1) \choose n- p} (-1)^{n-p} \ \mbox{by the Binomial Theorem}\\
&= \frac{(kn+1)_p}{C_n^{(k)}(kn+1)} {(kn+1) + (n-p) - 1 \choose n- p}\\ 
&= \frac{(kn+1)_p  (n)_{p}} {(k n+ n)_{ p}}
\end{align*}
\end{proof}
\noindent We now apply Corollary 3.3.1 to get the desired asymptotic normality.

\begin{theorem}[4.3.5]
Fix any $k \in \bN$. Then $\frac{X_n^{(k)} - \bE[X_n^{(k)}]}{\sqrt{Var \ X_n^{(k)}}} \xrightarrow{d} N(0,1).$
\end{theorem}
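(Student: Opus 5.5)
The plan is to apply Corollary 3.3.1 directly to the exact formula for the factorial moments from Proposition 4.3.4, namely
$$\bE[(X_n^{(k)})_p] = \frac{(kn+1)_p\,(n)_p}{((k+1)n)_p}.$$
Fix $k \in \bN$. The first task is to match this expression with the shape (5) of Corollary 3.3.1.

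\textbf{Choice of parameters.} Take two falling factorials in the numerator and one in the denominator:
\begin{itemize}
\item $(\alpha_1,\beta_1,\gamma_1) = (k,1,1)$ and $(\alpha_2,\beta_2,\gamma_2) = (1,0,1)$ in the numerator;
\item $(\alpha_1',\beta_1',\gamma_1') = (k+1,0,1)$ in the denominator;
\item $\sigma = 1$ and $y_p = z_p = 1$, so $u_p = v_p = 0$ for all $p$.
\end{itemize}
With these choices the hypotheses on $b_m, c_m$ hold trivially. For $m \geq 1$ they are the zero polynomial, which has degree at most $2m-1$. Condition (3) holds since $u_p - v_p = 0 = p(u_1 - v_1)$. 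Condition (4) reads $0 + (1+1) - 1 = 1$, which holds.

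\textbf{Validity range.} Proposition 4.3.4 gives the formula for all $p \leq nk$. In particular it holds whenever $n > p$, as (5) requires, since $k \geq 1$.

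\textbf{The two nondegeneracy conditions.} These are the only step needing any care, and they reduce to elementary arithmetic.
\begin{itemize}
\item For (1), we need $\frac{1}{k} + 1 \neq \frac{1}{k+1}$. The left side exceeds $1$ and the right side is less than $1$, so this holds.
\item For (2), we need $k+1 \neq k\left(\frac1k + 1 - \frac{1}{k+1}\right)$. The right side simplifies to $(k+1) - \frac{k}{k+1}$, which differs from $k+1$ because $\frac{k}{k+1} \neq 0$.
\end{itemize}

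With all hypotheses verified, Corollary 3.3.1 applies (with $\chi_n = n$). It yields $\frac{X_n^{(k)} - \bE[X_n^{(k)}]}{\sqrt{Var\ X_n^{(k)}}} \xrightarrow{d} N(0,1)$.
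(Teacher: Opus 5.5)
Your proposal is correct and is essentially the paper's proof: the paper also applies Corollary 3.3.1 with $\sigma = y_p = z_p = 1$, $u_p = v_p = 0$, numerator factors $(kn+1)_p$ and $(n)_p$, and denominator factor $((k+1)n)_p$, and it checks (1) and (2) by the same arithmetic (its difference $1 - \frac{1}{k+1}$ in (2) is exactly your $\frac{k}{k+1}$). Your explicit remark that the range $p \leq nk$ of Proposition 4.3.4 covers the requirement $n > p$ in (5) is a small detail the paper leaves implicit.
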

\begin{proof}
We want to apply Corollary 3.3.1 with $u_p = v_p = 0 = \deg y_p = \deg z_p$ and with \ \ $\sigma = y_p(x) = z_p (x) = 1$. Note that (3) and (4) hold trivially. Next, we check condition (1) by proving $0 < $ LHS $-$ RHS of (1). We have
$$g_{k} := \mbox{LHS $-$ RHS of (1)} = \frac{1^2}k + \frac{1^2}1 - \frac{1^2}{k+1} = \frac1{k(k+1)} + 1 > 0 $$
 Thus (1) holds. To check condition (2), we check that $0 < $ LHS $-$ RHS of (2). We have
$$h_{k} := \mbox{LHS $-$ RHS of (2)} = (k+1)^{1} - k^1 \cdot 1^1 \cdot g_{k} = (k+1) - k \bigg(\frac1{k(k+1)} + 1 \bigg) = 1 - \frac1{k+1} > 0$$
Therefore (2) holds as well. The desired conclusion then follows by Corollary 3.3.1. \\
\end{proof}

\noindent Next, we consider a less trivial class of random variables on $NC^k(n)$, namely block size statistics. For any $n, k, t \in \bN$ with $ t \leq n$, consider the random variable $Y_n^{(t,k)}$ which counts the number of blocks of size $tk$ in a $\pi \in NC^k(n)$ chosen uniformly at random. This is the random variable to which we will apply Corollary 3.3.1. To do this, we first compute its factorial moments.
\begin{proposition}[4.3.6]
$$\bE[(Y_n^{(t,k)})_p] = \frac{(kn+1)_p (kn)_p (n)_{pt}} {(k n+ n)_{pt + p}} \ \forall n,k,p,t \in \bN \ \mbox{with} \ t \leq n \ \mbox{and} \ p \leq nk.$$
\end{proposition}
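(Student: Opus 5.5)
The plan is to follow the template of Proposition 4.3.4 almost verbatim, changing only the weight specialization. I will mark only blocks of size $tk$: set $w_{tk} = w$ and $w_{jk} = 1$ for all $j \neq t$. With $g_k(u) := g_k(u,1,\ldots,w,\ldots)$ and $f_k$ similarly, the functional equation $g_k(u) = u\phi(g_k(u))$ holds with
$$\phi(x) = 1 + \sum_{j \neq t} x^{jk} + w x^{tk} = (1-x^k)^{-1} + (w-1)x^{tk}.$$
Exactly as in Proposition 4.3.4, the probability generating function is
$$G(w) = \frac{1}{C_n^{(k)}(kn+1)} [x^{kn}]\, \phi(x)^{kn+1},$$
by Theorem 4.2.1 and Proposition 4.3.3. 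By Theorem 4.2.3, $\bE[(Y_n^{(t,k)})_p]$ is the $p$-th $w$-derivative of $G$ at $w=1$. Differentiation in $w$ commutes with extracting the coefficient in $x$, so I can differentiate inside.

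Since $\phi$ is affine in $w$ with $\partial_w \phi = x^{tk}$, we get
$$\frac{d^p}{dw^p}\phi^{kn+1} = (kn+1)_p\, x^{tkp}\, \phi^{kn+1-p}.$$
At $w=1$ we have $\phi = (1-x^k)^{-1}$. This is the one place where the computation differs from the block-count case: the factor is $x^{tkp}$ rather than $(x^k/(1-x^k))^p$. Hence the exponent of $(1-x^k)$ becomes $-(kn+1-p)$ instead of $-(kn+1)$, and the shift is $tkp$ instead of $kp$. Substituting $y = x^k$, I then need
$$[y^{n-tp}](1-y)^{-(kn+1-p)} = {kn-p+n-tp \choose n-tp},$$
using Definition 2.1.1 for negative binomial coefficients and the Binomial Theorem. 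If $tp > n$ this coefficient is $0$, which matches the right-hand side because $(n)_{pt}=0$. The hypothesis $p \le nk$ keeps $kn+1-p \ge 1$, so the negative-exponent manipulation is legitimate.

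Finally, I divide by $C_n^{(k)}(kn+1) = {(k+1)n \choose n}$ and simplify the factorials:
$$(kn+1)_p \cdot \frac{(kn+n-p-tp)!}{(n-tp)!\,(kn-p)!} \cdot \frac{n!\,(kn)!}{(kn+n)!}.$$
I regroup this as $(kn+1)_p \cdot \frac{n!}{(n-tp)!} \cdot \frac{(kn)!}{(kn-p)!} \cdot \frac{(kn+n-p-tp)!}{(kn+n)!}$. The middle two factors are $(n)_{pt}$ and $(kn)_p$, and the last is $1/(kn+n)_{pt+p}$, which gives the claimed formula. The main obstacle is this bookkeeping step: in particular, checking that the exponent $-(kn+1-p)$ produces exactly $(kn)_p$ and the combined falling factorial $(kn+n)_{pt+p}$, with no off-by-one errors.
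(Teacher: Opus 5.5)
Your proposal is correct and follows the paper's proof essentially step for step. It uses the same specialization $w_{tk}=w$, the same $\phi(x)=(1-x^k)^{-1}+(w-1)x^{tk}$, Lagrange inversion, differentiation inside $[x^{kn}]$, the substitution $y=x^k$, and the negative binomial coefficient. Your factorial bookkeeping and your treatment of the case $tp>n$ fill in details the paper leaves implicit. One caveat: when $p(t+1)>n(k+1)$ the stated right-hand side is formally $0/0$, so your remark that it "matches" there needs qualifying, but that is a defect of the statement rather than of your argument.
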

\begin{proof}
Fix any such $n, k,p,t$. Set $w_{tk} = w, w_{jk} = 1 \ \forall j \neq t$. For a fixed $w$, to simplify notation, let $f_k(u) = f_k(u,1,\ldots,1,w,1,\ldots)$, let $g_k(u) = g_k(u,1,\ldots,1,w,1,\ldots)$ and let $\phi(x) = 1 + \sum \limits_{j=1}^{\infty} x^{jk} + (w-1) x^{tk} = \frac1{1-x^k} + (w-1)x^{tk}$ so that $g_k(u) = u(1+f_k(u)) = u \phi(g_k(u))$. Then the probability generating function of $Y_n^{(t,k)}$ is 
$$G(w) =\frac1{C_n^{(k)}} [u^{kn}] f_k(u) =\frac1{C_n^{(k)}} [u^{kn+1}] g_k(u) = \frac1{C_n^{(k)}} [x^{kn}] \frac1{kn+1} \phi(x)^{kn+1} \ \mbox{by Theorem 4.2.1, so}$$
\begin{align*}
\bE[(Y_n^{(t,k)})_p] &= \frac{d^p}{dw^p}\bigg( \frac1{C_n^{(k)}}  [x^{kn}] \frac1{kn+1} \phi(x)^{kn+1}\bigg)  \bigg|_{w=1} \ \mbox{by Theorem 4.2.3} \\
&= \frac1{C_n^{(k)}(kn+1)} [x^{kn}]  \frac{d^p}{dw^p}  \bigg( \frac1{1-x^k} + (w-1) x^{tk} \bigg)^{kn+1} \bigg |_{w=1}\\
&= \frac{(kn+1)_p}{C_n^{(k)}(kn+1)}  [x^{kn- ptk}] (1-x^k)^{-(kn+1 - p)}  \\
&= \frac{(kn+1)_p}{C_n^{(k)}(kn+1)}  [y^{n- pt}] (1-y)^{-(kn+1 - p)}  \\
\qquad \qquad \qquad \qquad &= \frac{(kn+1)_p}{C_n^{(k)}(kn+1)} {-(kn+1-p) \choose n- pt} (-1)^{n-pt} \ \mbox{by the Binomial Theorem}\\
&= \frac{(kn+1)_p}{C_n^{(k)}(kn+1)} {(kn+1-p) + (n-pt) - 1 \choose n- pt}\\ 
&= \frac{(kn+1)_p (kn)_p (n)_{pt}} {(k n+ n)_{pt + p}}
\end{align*}
\end{proof}

\noindent We now apply Corollary 3.3.1 to get the desired asymptotic normality. \\

\noindent \textbf{Theorem} (4.3.7). \ Fix any $t, k \in \bN$. Then $\frac{Y_n^{(t,k)} - \bE[Y_n^{(t,k)}]}{\sqrt{Var \ Y_n^{(t,k)}}} \xrightarrow{d} N(0,1).$
\begin{proof}
We want to apply Corollary 3.3.1 with $u_p = v_p = 0 = \deg y_p = \deg z_p$ and with \ \ $\sigma = y_p(x) = z_p (x) = 1$. Note that (3) holds trivially and $1 + 1 + t - (1 + t) = 1$ so (4) holds. Next, we check condition (1) by proving $0 < $ LHS $-$ RHS of (1). We have
$$g_{k,t} := \mbox{LHS $-$ RHS of (1)} = \frac{1^2}k + \frac{1^2}k + \frac{t^2}1 - \frac{(t+1)^2}{k+1} = \frac{2}{k} + t^2 - \frac{(t+1)^2}{k+1} $$
$$= \frac{2(k+1) + t^2 (k^2 + k) - k (t^2 +2t+1)}{k (k+1)} = \frac{t^2k^2 - 2tk +k + 2}{k(k+1)} = \frac{(tk - 1)^2 + k+1}{k(k+1)} > 0 $$
 Thus (1) holds. To check condition (2), we check that $0 < $ LHS $-$ RHS of (2). We have
$$h_{k,t} := \mbox{LHS $-$ RHS of (2)} = (k+1)^{t+1} - k^1 \cdot k^1 \cdot 1^t \cdot g_{k,t} = (k+1)^{t+1} - k^2 g_{k,t}$$
$$= \frac{(k+1)^{t+2} -k (tk-1)^2 - k^2 - k}{k+1} = \frac{(k+1)^{t+2} - t^2 k^3 + k^2(2t-1) - 2k}{k+1} $$
Observe that $f(x) := (x+2) (x+1) x - 6 x^2$ has roots $0,1, 2$ and is increasing for $x > 2$ so $f(x) > 0 \ \forall x > 2$ and thus ${ t+2 \choose 3} \geq t^2 \ \forall t \in \bN$. By inspecting the coefficients of $k,k^3$ in $(k+1)^{t+2}$ we get $h_{k,t} > 0$ for this arbitrary $k,t \geq 1$. Therefore (2) holds. \\
The desired conclusion follows by Corollary 3.3.1.\\
\end{proof}

\subsection{Rooted Plane Trees}
There is a natural bijection between rooted plane trees and noncrossing partitions arising from their generating functions, which will give us other statistics to which we can apply our result.

\begin{definition}[4.4.1] A \underline{tree} is a connected undirected (non-empty) graph with no cycles. A \underline{rooted tree} is a tree with a node labelled as the ``root" and edges directed away from the root. The \underline{children} of a node are its direct successors. A \underline{rooted plane tree} is a rooted tree with an ordering added to each node's children. The \underline{outdegree} of a node is the number of children. A \underline{leaf} is a vertex with outdegree 0. Let $T^k(n)$ be the number of rooted plane trees with $kn$ edges and all outdegrees of nodes being multiples of $k$. 
\end{definition}

\noindent For example, the following is a rooted plane tree with left-to-right ordering of children, with 11 edges and with the labels showing the outdegrees of each node:
\begin{center}
\begin{tikzpicture}
[level distance=10mm,
 my circle/.style={draw, fill, circle, minimum size=5pt, inner sep=0pt},
level 1/.style={sibling distance=30mm},
level 2/.style={sibling distance=20mm},
level 3/.style={sibling distance=10mm}]
\node[my circle]  {} [grow'=up]
child {node[my circle] {}
child {node[my circle] {}
child {node[my circle] {}}
child {node[my circle] {}}
}
child {node[my circle] {}
child {node[my circle] {}}
child {node[my circle] {}}
}
}
child {node[my circle] {}
child {node[my circle] {}
child[missing]
child {node[my circle] {}}
}
child {node[my circle] {}}};
\path (0.5,0) node (a) {2};
\path (1.9,1) node (a) {2};
\path (-1.9,1) node (a) {2};
\path (2.9,2) node (a) {0};
\path (1.1,2) node (a) {1};
\path (-0.2,2) node (a) {2};
\path (-3,2) node (a) {2};
\path (1.3,3) node (a) {0};
\path (0.3,3) node (a) {0};
\path (-0.7,3) node (a) {0};
\path (-1.7,3) node (a) {0};
\path (-2.7,3) node (a) {0};

\end{tikzpicture}
\end{center}

In the next proposition, we give a generating function for rooted plane trees (for example, as found in \cite{ancomb} by Flajolet and Sedgewick).

\begin{proposition}[4.4.2] $t(v,z_0,z_1,\ldots) = \sum \limits_{j=0}^{\infty} v^j z_j t(v,z_0,z_1,\ldots)^j$ is a generating function for all nonempty rooted plane trees, where the weight of $v$ is the number of edges and the weight of each $z_j$ is the number of vertices with outdegree $j$ (for all $j \geq 0$).

\end{proposition}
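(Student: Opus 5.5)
We prove Proposition 4.4.2 by a first-child (root) decomposition, in the same spirit as the proof of Proposition 4.3.2. Let $t = t(v,z_0,z_1,\ldots) = \sum_T v^{e(T)} \prod_{j \geq 0} z_j^{n_j(T)}$, where $T$ ranges over all nonempty rooted plane trees, $e(T)$ is the number of edges of $T$, and $n_j(T)$ is the number of vertices of $T$ with outdegree $j$. We must show that $t$ satisfies the stated functional equation. We must also check that this equation determines $t$ uniquely as a formal power series. For that, note that the coefficient of $v^N$ on the right-hand side depends only on coefficients of $t$ of $v$-degree at most $N - j$ in the $j$-th summand. The $j = 0$ summand is just $z_0$, so it involves no $t$ at all. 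Hence the coefficients of $t$ are determined recursively in $N$.

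\textbf{Decomposition.} Given a nonempty rooted plane tree $T$, let $j$ be the outdegree of its root. Deleting the root leaves an ordered $j$-tuple $(T_1,\ldots,T_j)$ of nonempty rooted plane trees, each rooted at a child of the original root and ordered by the plane ordering of those children. Conversely, any $j \geq 0$ and any ordered $j$-tuple of nonempty rooted plane trees arise in this way: attach a new root with edges to the roots of $T_1,\ldots,T_j$ in that order. The two constructions are mutually inverse, so we get a bijection between nonempty rooted plane trees and $\bigsqcup_{j \geq 0} \{\text{$j$-tuples of nonempty rooted plane trees}\}$.

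\textbf{Weights.} Next we track how the statistics behave under this bijection. For edges, $e(T) = j + \sum_i e(T_i)$, since the $j$ root edges are added to the edges of the subtrees. For outdegrees, every non-root vertex keeps its outdegree, and the root contributes one vertex of outdegree $j$. So $n_m(T) = [m=j] + \sum_i n_m(T_i)$. Therefore the weight of $T$ factors as $v^j z_j \prod_{i=1}^j (\text{weight of } T_i)$. Summing over all $j$-tuples gives $v^j z_j\, t^j$, and summing over $j \geq 0$ gives $t = \sum_{j \geq 0} v^j z_j t^j$. The case $j = 0$ is the single-vertex tree, with weight $z_0$.

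\textbf{Main subtlety.} The only point needing care is that the sum is well defined and the equation determines $t$, because the $j = 0$ term has $v$-degree $0$. Every tree with $N$ edges has exactly $N+1$ vertices, so for each fixed power of $v$ there are only finitely many trees. Thus each coefficient of $v^N$ in $t$ is a polynomial in the $z_j$, and the recursion above is well founded. This finishes the argument.
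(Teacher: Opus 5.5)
Your proof is correct and follows the paper's argument. You decompose at the root of outdegree $j$ into an ordered $j$-tuple of arbitrary nonempty subtrees, picking up $v^j z_j$ from the root's $j$ edges and the root itself; your added checks (that this decomposition is a bijection, and that the functional equation determines $t$ coefficient-by-coefficient in $v$) are details the paper leaves implicit.
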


\begin{proof}
Consider any nonempty rooted plane tree $T$ (of some size), and let the root $R$ have outdegree $j$ for some $j \geq 0$. Observe that each of the $j$ subtrees with one of the children of $R$ as the root is just an arbitrary nonempty rooted plane tree. For example, a rooted plane tree with a root of outdegree 4 looks like:
\begin{center}
\begin{tikzpicture}
[level distance=10mm,
 my circle/.style={draw, fill, circle, minimum size=5pt, inner sep=0pt},
itria/.style={  draw,shape border uses incircle,  isosceles triangle,shape border rotate=90},
level 1/.style={sibling distance=10mm},
level 2/.style={sibling distance=20mm},
level 3/.style={sibling distance=10mm}]
\node[my circle]  {} [grow'=up]
child {node[itria] {}}
child {node[itria] {}}
child {node[itria] {}}
child {node[itria] {}};
\path (0.5,0) node (a) {R};

\end{tikzpicture}
\end{center}
where the triangles represent arbitrary nonempty rooted plane trees. Therefore a generating function for all nonempty rooted plane trees with $v$ counting the edges and $z_j$ counting the vertices with outdegree $j$ (for all $j \geq 0$) is 
$$t(v,z_0,z_1,\ldots) = \sum \limits_{j=0}^{\infty} v^j z_j t(v,z_0,z_1,\ldots)^j$$
where $v^j$ comes from the $j$ edges going out from $R$ and $z_j$ comes from $R$ being a vertex of outdegree $j$.\\
\end{proof}

Fix $k \in \bN$. It follows that the generating function for all nonempty rooted plane trees with outdegrees divisible by $k$ is 
$$t_k(v,z_0,z_1,\ldots) = \sum \limits_{j=0}^{\infty} v^{jk} z_{jk} t(v,z_0,z_1,\ldots)^{jk}$$
To apply Lagrange's Implicit Function Theorem, define 
$$s_k(v,z_0,z_1,\ldots) := v t_k(v,z_0,z_1,\ldots) = v \bigg(z_0 + \sum_{j=1}^{\infty} z_{jk} s_k(v,z_0,z_1,\ldots)^{jk} \bigg) $$
But there is a natural bijection between $s_k(v,z_0,z_1,\ldots)$ and $g_k(u,w_1,\ldots)$ given by $v \mapsto u$, $z_0 \mapsto 1, z_j \mapsto w_j \ \forall j \geq 1$ (by observing that $z_0$ appearing in the above expression for $s_k$ stands for the tree with 1 node, which corresponds to the ``1" in the expression for $g_k$). In particular, in this bijection between $NC^k(n) \cup \{\emptyset\}$ and $T^k(n)$, edges of the tree correspond to elements of the partition and the outdegree of the nodes corresponds to the size of the blocks. This allows us to translate results involving trees to the context of non-crossing partitions, as done by Dershowitz and Zaks in \cite{trees}.

For example, the following is a rooted plane tree and its corresponding non-crossing partition. To get this, note that there are 12 edges so we want a non-crossing partition of $\{1,\ldots,12\}$; the root has outdegree 2 and has 3 edges in its right subtree so the block containing 1 is $\{1,9\}$; recurse on the left and right subtrees of the root (i.e. on $\{2,3,4,5,6,7,8\}$ and on $\{10,11,12\}$). 

\begin{center}
\begin{tikzpicture}
[level distance=10mm,
 my circle/.style={draw, fill, circle, minimum size=5pt, inner sep=0pt},
level 1/.style={sibling distance=30mm},
level 2/.style={sibling distance=15mm},
level 3/.style={sibling distance=10mm}]
\node[my circle]  {} [grow'=up]
child {node[my circle] {}
child {node[my circle] {}
child {node[my circle] {}}
child {node[my circle] {}}
}
child {node[my circle] {}}
child {node[my circle] {}
child {node[my circle] {}}
child {node[my circle] {}}
}
}
child {node[my circle] {}
child {node[my circle] {}
child[missing]
child {node[my circle] {}}
}
child {node[my circle] {}}};
\path (1.2,0.4) node (a) {9};
\path (-1.2,0.4) node (a) {1};
\path (2.2,1.4) node (a) {12};
\path (0.8,1.4) node (a) {10};
\path (-0.4,1.4) node (a) {6};
\path (-1.7,1.6) node (a) {5};
\path (-2.5,1.4) node (a) {2};
\path (1.3,2.4) node (a) {11};
\path (0.5,2.4) node (a) {8};
\path (-0.5,2.4) node (a) {7};
\path (-2.5,2.4) node (a) {4};
\path (-3.5,2.4) node (a) {3};
\end{tikzpicture}\\
\end{center}
\begin{center}
\begin{tikzpicture}[thick,font=\small]
     \path (0,0) node (a) {1}
           (1,0) node (b) {2}
           (2,0) node (c) {3}
           (3,0) node (d) {4}
           (4,0) node (e) {5}
           (5,0) node (f) {6}
           (6,0) node (g) {7}
           (7,0) node (h) {8}
           (8,0) node (i) {9}
           (9,0) node (j) {10}
           (10,0) node (k) {11}
           (11,0) node (l) {12};
     \draw (a) -- +(0,0.75) -| (i);
     \draw (j) -- +(0,0.75) -| (l);
     \draw (k) -- +(0,0.5) ;
     \draw (b) -- +(0,0.6) -| (e);
     \draw (e) -- +(0,0.6) -| (f);
     \draw (g) -- +(0,0.6) -| (h);
     \draw (c) -- +(0,0.4) -| (d);

   \end{tikzpicture}
\end{center}

Using this bijection, we get immediate corollaries of Theorems 4.3.5 and 4.3.7.\\

\noindent For any $n, k \in \bN$ consider the random variable $\widetilde{X}_n^{(k)}$ counting the number of edges of a $T\in T^k(n)$ (taken uniformly at random). 
\begin{corollary}[4.4.3]
 Fix any $k \in \bN$. Then $\frac{\widetilde{X}_n^{(k)} - \bE[\widetilde{X}_n^{(k)}]}{\sqrt{Var \ \widetilde{X}_n^{(k)}}} \xrightarrow{d} N(0,1).$
\end{corollary}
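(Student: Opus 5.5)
The plan is to avoid redoing any moment computations. I will show that $\widetilde{X}_n^{(k)}$ has exactly the same distribution as $X_n^{(k)}$, the number of blocks of a uniformly random $\pi \in NC^k(n)$, and then quote Theorem 4.3.5.

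First, the statistic has to be read correctly. Taken literally, the number of edges of any $T \in T^k(n)$ is the constant $kn$, so its variance is $0$ and the normalization would be undefined. The intended statistic is the one corresponding to ``number of blocks'' under the bijection described above. Edges correspond to elements of the partition, and a node of outdegree $jk \geq k$ corresponds to a block of size $jk$. So $\widetilde{X}_n^{(k)}$ should count the nodes of $T$ with positive outdegree, i.e. the non-leaf vertices. I will prove the statement for this statistic.

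Next, I specialize the variables in $s_k(v,z_0,z_1,\ldots)$. Set $z_0 = 1$ and $z_{jk} = w$ for all $j \geq 1$. This gives
$$s_k(v) = v\bigg(1 + \sum_{j \geq 1} w\, s_k(v)^{jk}\bigg) = v\,\phi(s_k(v)), \qquad \phi(x) = 1 + \frac{w x^k}{1-x^k}.$$
This is precisely the functional equation satisfied by $g_k(u) = g_k(u,w,w,\ldots)$ in the proof of Proposition 4.3.4. Both series are determined by this equation, since it fixes their coefficients recursively, equivalently by Theorem 4.2.1. Hence $s_k(v) = g_k(v)$ as power series in $v$ with coefficients polynomial in $w$. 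Extracting $[v^{kn+1}]$, and using that $s_k = v\, t_k$ where $v$ counts edges, gives
$$\sum_{T \in T^k(n)} w^{\#\{\text{internal nodes of } T\}} = \sum_{\pi \in NC^k(n)} w^{\#\text{blocks of } \pi}.$$
Setting $w = 1$ shows $|T^k(n)| = C_n^{(k)}$. Dividing both sides by $C_n^{(k)}$ then shows that the probability generating functions of $\widetilde{X}_n^{(k)}$ and $X_n^{(k)}$ coincide.

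Equal probability generating functions mean equal distributions, so $\widetilde{X}_n^{(k)}$ has the factorial moments of Proposition 4.3.4, namely $\frac{(kn+1)_p (n)_p}{(kn+n)_p}$. Its mean and variance agree with those of $X_n^{(k)}$ as well. The normalized variable therefore has the same law as $\frac{X_n^{(k)} - \bE[X_n^{(k)}]}{\sqrt{Var \ X_n^{(k)}}}$, which converges in distribution to $N(0,1)$ by Theorem 4.3.5.

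I expect the main obstacle to be the bookkeeping in the second step. One must check that the $z_0$ term, i.e. the single-node tree, is exactly the ``$1$'' in $\phi$. One must also check that the leaves, which carry no weight $w$, correspond to the empty pockets. Only then is the internal-node count, and not the total vertex count, the statistic matched with blocks.
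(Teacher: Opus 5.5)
Your proposal is correct and is essentially the paper's argument. The paper derives Corollary 4.4.3 directly from Theorem 4.3.5 via the generating-function identification $s_k \leftrightarrow g_k$ (with $v\mapsto u$, $z_0\mapsto 1$, $z_{jk}\mapsto w_{jk}$), and you make the resulting equality of probability generating functions explicit. You are also right that the number of edges is constantly $kn$ on $T^k(n)$, so the statistic the bijection matches with blocks, and the only non-degenerate reading of the corollary, is the number of non-leaf nodes; this agrees with Proposition 4.4.5, since the number of leaves is $kn+1-\widetilde{X}_n^{(k)}$.
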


\noindent For any $n, k, t \in \bN$ with $ t \leq n$ consider the random variable $\widetilde{Y}_n^{(t,k)}$ counting the number of nodes with outdegree $tk$ in a $T \in T^k(n)$ (taken uniformly at random).  
\begin{corollary}[4.4.4]
 Fix any $t, k \in \bN$. Then $\frac{\widetilde{Y}_n^{(t,k)} - \bE[\widetilde{Y}_n^{(t,k)}]}{\sqrt{Var \ \widetilde{Y}_n^{(t,k)}}} \xrightarrow{d} N(0,1).$
\end{corollary}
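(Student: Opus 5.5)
The plan is to show that $\widetilde{Y}_n^{(t,k)}$ has exactly the same distribution as $Y_n^{(t,k)}$ for every $n \geq 1$. Once that is done, the normalized variables coincide in law, and Theorem 4.3.7 gives the conclusion. Equivalently, $\widetilde{Y}_n^{(t,k)}$ has the factorial moments of Proposition 4.3.6, so Corollary 3.3.1 applies verbatim, with conditions (1)--(4) already checked in the proof of Theorem 4.3.7.

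To establish equality in law, I would compare probability generating functions through the two functional equations. Set $z_0 = 1$, $z_{tk} = w$ and $z_{jk} = 1$ for $j \neq t$. Also set $w_{tk} = w$ and $w_{jk} = 1$ for $j \neq t$, and identify $v$ with $u$. Then
\[ s_k(v) = v\,\phi(s_k(v)) \quad\text{and}\quad g_k(u) = u\,\phi(g_k(u)), \]
with the same series $\phi(x) = \frac{1}{1-x^k} + (w-1)x^{tk}$ as in the proof of Proposition 4.3.6. An equation of the form $h = u\,\phi(h)$ with $h(0)=0$ has a unique formal power series solution, since its coefficients are determined recursively. Uniqueness gives $s_k = g_k$ as power series in $u$ with coefficients polynomial in $w$.

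Extracting $[u^{kn+1}]$ from both sides then gives
\[ [v^{kn}]\, t_k = [u^{kn}](1+f_k) = [u^{kn}]\, f_k \qquad (n \geq 1). \]
The left-hand side is $\sum_{T \in T^k(n)} w^{\#\{\text{nodes of outdegree } tk\}}$, because the exponent of $v$ counts edges. The right-hand side is $\sum_{\pi \in NC^k(n)} w^{\#\{\text{blocks of size } tk\}}$. At $w=1$ both equal $C_n^{(k)}$ by Proposition 4.3.3, so $|T^k(n)| = |NC^k(n)|$. Dividing by this common count shows the probability generating functions agree, so the two variables are identically distributed.

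The main obstacle is bookkeeping rather than analysis. One point is the role of $z_0$, the leaf weight, which must be specialized to $1$ so that the leaf term matches the constant "1" in $1+f_k$. The other is the index shift between $s_k = v\,t_k$ and $g_k = u(1+f_k)$, where the empty partition corresponds to the one-node tree. I would also note that the bijection described before Corollary 4.4.3 realizes this identity combinatorially (edges correspond to elements, outdegrees to block sizes). The generating-function argument alone is sufficient, though, so I would not need to verify that bijection in detail.
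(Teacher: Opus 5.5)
Your proposal is correct and follows essentially the same route as the paper. The paper deduces Corollary 4.4.4 from Theorem 4.3.7 by identifying $s_k$ with $g_k$ under $v \mapsto u$, $z_0 \mapsto 1$, $z_j \mapsto w_j$, so that nodes of outdegree $tk$ correspond to blocks of size $tk$; you make this identification rigorous through uniqueness of the formal power series solution of $h = u\,\phi(h)$, and you handle the index shift between $[u^{kn+1}]$ and $[v^{kn}]$ explicitly, which the paper leaves implicit.
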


Note that the random variables $Y_n^{(t,k)}, \widetilde{Y}_n^{(t,k)}$ are only considered for $t \geq 1$, since ``the number of blocks of size 0 in a non-crossing partition" is not well-defined. However, we can still consider the number of vertices in a rooted plane tree with outdegree 0 (i.e. the number of leaves), i.e. $\widetilde{Y}_n^{(0,k)}$. By this bijection and by Proposition 4.3.6 (with $t  =0$), we get the factorial moments of $\widetilde{Y}_n^{(0,k)}$. 

\begin{proposition}[4.4.5]
$$\bE[(\widetilde{Y}_n^{(0,k)})_p] = \frac{(kn+1)_p  (kn)_{p}} {(k n+ n)_{ p}} \ \forall n,k,p \in \bN \ \mbox{with} \ p \leq nk$$
\end{proposition}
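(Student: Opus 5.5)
The plan is to compute the factorial moments of the number of leaves directly. I will use the tree generating function together with Lagrange's Implicit Function Theorem (Theorem 4.2.1) and Theorem 4.2.3, exactly as in the proof of Proposition 4.3.6, but now marking the outdegree-$0$ vertices.

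First I weight leaves only. Set $z_0 = w$ and $z_{jk} = 1$ for all $j \geq 1$. Then $s_k(v) = v\,\phi(s_k(v))$ with
\[
\phi(x) = w + \sum_{j \geq 1} x^{jk} = \frac{1}{1-x^k} + (w-1).
\]
This is precisely the function from Proposition 4.3.6 with $x^{tk}$ replaced by $x^0 = 1$, i.e. the formal case $t=0$.

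Next I read off the probability generating function. A tree in $T^k(n)$ has $kn$ edges, so it corresponds to the coefficient of $v^{kn+1}$ in $s_k$. Its total count is $C_n^{(k)}$, by the bijection with $NC^k(n)$ (or by setting $w=1$ and repeating the computation of Proposition 4.3.3). Hence
\[
G(w) = \frac{1}{C_n^{(k)}(kn+1)}\,[x^{kn}]\,\phi(x)^{kn+1}.
\]

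I then differentiate $p$ times in $w$ and set $w=1$. This gives $(kn+1)_p\,(1-x^k)^{-(kn+1-p)}$. Taking $[x^{kn}]$ and substituting $y = x^k$ yields
\[
[y^{n}]\,(1-y)^{-(kn+1-p)} = \binom{kn+n-p}{n}.
\]
Finally I divide by $C_n^{(k)}(kn+1) = \binom{(k+1)n}{n}$ and simplify:
\[
\frac{\binom{kn+n-p}{n}}{\binom{kn+n}{n}} = \frac{(kn)_p}{(kn+n)_p}.
\]
Multiplying by $(kn+1)_p$ gives the stated formula.

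Alternatively, one can note that each line of the proof of Proposition 4.3.6 remains valid verbatim at $t=0$, and substitute $t=0$ into its final expression. Either way the argument is routine. The only real care needed is the bookkeeping of the edge weight $v$ versus the node count, and checking that $p \leq nk$ keeps all binomial coefficients in their standard range. I do not foresee a genuine obstacle.
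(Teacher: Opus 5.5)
Your proposal is correct and follows the paper's route. The paper simply invokes the computation of Proposition 4.3.6 with $t=0$ through the tree--partition correspondence, and you carry out that same Lagrange-inversion computation with $\phi(x)=\frac{1}{1-x^k}+(w-1)$, including the correct simplification $\binom{kn+n-p}{n}/\binom{(k+1)n}{n}=(kn)_p/(kn+n)_p$. Marking $z_0$ directly in $s_k$, as you do, is actually the cleaner justification, since Proposition 4.3.6 is only stated for $t\geq 1$.
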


\begin{corollary}[4.4.6]
 Fix any $k \in \bN$. Then $\frac{\widetilde{Y}_n^{(0,k)} - \bE[\widetilde{Y}_n^{(0,k)}]}{\sqrt{Var\ \widetilde{Y}_n^{(0,k)}}} \xrightarrow{d} N(0,1).$
\end{corollary}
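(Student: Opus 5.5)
Corollary 4.4.6 will follow by applying Corollary 3.3.1 to the factorial moments from Proposition 4.4.5, in the same way as Theorems 4.3.5 and 4.3.7. Proposition 4.4.5 gives
$$\bE[(\widetilde{Y}_n^{(0,k)})_p] = \frac{(kn+1)_p (kn)_p}{(kn+n)_p}.$$
This is of the form (5) of Corollary 3.3.1 with $\sigma = 1$, $y_p = z_p = 1$ and $u_p = v_p = 0$. The numerator has $k=2$ factors, with $\alpha_1=\alpha_2=k$, $\beta_1 = 1$, $\beta_2 = 0$ and $\gamma_1=\gamma_2=1$. The denominator has $l=1$ factor, with $\alpha_1' = k+1$, $\beta_1'=0$ and $\gamma_1' = 1$.

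Condition (5) of Corollary 3.3.1 requires the formula for all $p$ with $n > p$. Proposition 4.4.5 supplies it for $p \le nk$, and $n > p$ implies $p \le nk$ since $k \ge 1$, so this is covered.

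Next I check the degree conditions. Condition (3) is trivial because $u_p = v_p = 0$. Condition (4) becomes $0 + (1+1) - 1 = 1$, which holds. The requirement that the coefficients $b_m, c_m$ have degree at most $2m-1$ for $m\ge 1$ is vacuous, since $y_p$ and $z_p$ are constant and those coefficients vanish.

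Condition (1) asks that
$$g_k := \frac1k + \frac1k - \frac1{k+1} = \frac{k+2}{k(k+1)}$$
be nonzero, and it is positive. Condition (2) asks that $(k+1)^1 \neq 1\cdot k\cdot k \cdot g_k$. The right side equals $\frac{k(k+2)}{k+1}$, so
$$(k+1) - \frac{k(k+2)}{k+1} = \frac{(k+1)^2 - k^2 - 2k}{k+1} = \frac{1}{k+1} > 0,$$
and (2) holds.

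All hypotheses of Corollary 3.3.1 are then satisfied, which gives the asymptotic normality. None of the steps is hard. The only place needing care is matching the parameters and the range of $p$ against the corollary's hypotheses, and checking that the strict inequalities in (1) and (2) hold for every $k \ge 1$, including $k=1$; the closed forms above do this uniformly.
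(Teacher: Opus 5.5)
Your proposal is correct and is essentially the paper's own proof: you apply Corollary 3.3.1 to the factorial moments of Proposition 4.4.5 with $\sigma = y_p = z_p = 1$ and $u_p = v_p = 0$, and verify (1) and (2) through the same quantities $g_k = \frac{k+2}{k(k+1)} > 0$ and $(k+1) - k^2 g_k = \frac{1}{k+1} > 0$. You also check condition (4) ($0 + 2 - 1 = 1$) and the range $p < n \le nk$ explicitly, which the paper leaves implicit, but this does not change the argument.
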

\begin{proof}
We want to apply Corollary 3.3.1 with $u_p = v_p = 0 = \deg y_p = \deg z_p$ and with \ \ $\sigma = y_p(x) = z_p (x) = 1$. Note that (3) and (4) hold trivially. Next, we check condition (1) by proving $0 < $ LHS $-$ RHS of (1). We have
$$g_{k} := \mbox{LHS $-$ RHS of (1)} = \frac{1^2}k + \frac{1^2}k  - \frac{1^2}{k+1} = \frac{k + 2}{k(k+1)} > 0 $$
 Thus (1) holds. To check condition (2), we check that $0 < $ LHS $-$ RHS of (2). We have
$$h_{k} := \mbox{LHS $-$ RHS of (2)} = (k+1) - k^1 \cdot k^1 \cdot g_{k} = (k+1) - \frac{k(k+2)}{k+1} = \frac1{k+1} > 0$$
Therefore (2) holds. The desired conclusion follows by Corollary 3.3.1.\\
\end{proof}

There are various different bijections between rooted plane trees and non-crossing partitions which can be used to yield interesting correspondences between seemingly unrelated statistics. Moreover, this simple procedure of determining the generating function, using Lagrange inversion to compute the factorial moments, and applying Corollary 3.3.1 will work effectively in many combinatorial settings beyond non-crossing partitions and rooted plane trees.


\begin{thebibliography}{3}
\bibitem{arizmendi} 
O. Arizmendi. 
\textit{Statistics of blocks in k-divisible non-crossing partitions}.
2012,\\ (arXiv: 1201.6576).

\bibitem{billingsley} 
P. Billingsley.
\textit{ Probability and Measure}.
 Wiley-Interscience, 1995. 

\bibitem{cumulants} 
K. Bobecka, P. Hitczenko, F. L\'{o}pez-Bl\'{a}zquez, G. Rempa\l{}a and J. Weso\l{}owski.
\textit{ Asymptotic Normality through Factorial Cumulants and Partitions Identities}. 
Jan. 2013, \\ (arXiv: 1106.4684).

\bibitem{bona} 
M. B\'{o}na.
\textit{On Three Diﬀerent Notions of Monotone Subsequence}.
2007,\\ (arXiv: 0711.4325).

\bibitem{trees}
N. Dershowitz and S. Zaks.
Ordered Trees and Non-crossing Partitions.
\textit{Discrete Mathematics} 62(2) (1986): 215-218.

\bibitem{ancomb}
P. Flajolet and R. Sedgewick.
\textit{Analytic Combinatorics}.
Cambridge University Press, 2009.

\bibitem{gao}
Z. Gao and C. N. Wormald.
\textit{Asymptotic normality determined by high moments, and submap counts of random maps},
Probability Theory and Related Fields, vol. 130, no. 3, pp. 368 -- 376, 2004. 

\bibitem{janson}
S. Janson.
\textit{Normal convergence by higher semi-invariants with applications to sums of dependent random variables and random graphs}, Ann. Prob. 16, no. 1, pp. 305--312, 1988.

\bibitem{nica} 
A. Nica and R. Speicher. 
\textit{ Lectures on the Combinatorics of Free Probability}. 
London Mathematical Society Lecture Note Series, vol. 335, Cambridge University Press, 2006.

\bibitem{scheinerman} 
E. R. Scheinerman.
\textit{Mathematics: A Discrete Introduction}.
Google Books. Cengage Learning, 2012. Web. 
\texttt{goo.gl/whraD6}

\bibitem{weiss}
I. Weiss.
Limiting Distributions in Some Occupancy Problems.
\textit{The Annals of Mathematical Statistics} 29(3) (1958): 878-884.





\end{thebibliography}
\end{document}